\renewcommand{\proof}{{\noindent \bf Proof:\ }}
\newtheorem{theorem}{Theorem}[section]
\newtheorem{lemma}[theorem]{Lemma}
\newtheorem{corollary}[theorem]{Corollary}
\newtheorem{definition}[theorem]{Definition}
\newtheorem{proposition}[theorem]{Proposition}
\newtheorem{remark}{Remark}[section]
\title[Pullback attractors for a class of non-autonomous systems]{Pullback attractors for a class of non-autonomous thermoelastic plate systems}
\numberwithin{equation}{section} \numberwithin{theorem}{section}
\author[F. D. M. Bezerra]{Flank D. M. Bezerra$^\dag$}\thanks{$^\dag$Research partially
supported by FAPESP \#2014/03686-3, Brazil}
\address[F. D. M. Bezerra]{Universidade Federal da Para\'{\i}ba, Departamento de Matem\'atica, 58051-900 Jo\~{a}o Pessoa PB, Brazil.}
\email{flank@mat.ufpb.br}
\author[V. L. Carbone]{Vera L. Carbone}
\address[V. L. Carbone]{Universidade Federal de S\~{a}o
Carlos, Departamento de Matem\'atica, 13565-905 S\~{a}o
Carlos SP, Brazil.}
\email{carbone@dm.ufscar.br}
\author[M. J. D. Nascimento]{Marcelo J. D. Nascimento$^\star$}\thanks{$^\star$Research partially
supported by FAPESP \#2014/03109-6, Brazil}
\address[M. J. D. Nascimento]{Universidade Federal de S\~{a}o
Carlos, Departamento de Matem\'atica, 13565-905 S\~{a}o
Carlos SP, Brazil.}
\email{marcelo@dm.ufscar.br}
\author[K. Schiabel]{Karina Schiabel}
\address[K. Schiabel]{Universidade Federal de S\~{a}o
Carlos, Departamento de Matem\'atica, 13565-905 S\~{a}o
Carlos SP, Brazil.}
\email{schiabel@dm.ufscar.br}
\date{\today}
\begin{document}

\maketitle

\begin{abstract}
In this article we study the asymptotic behavior of solutions, in sense of global pullback attractors, of the evolution system
\[
\begin{cases}
u_{tt} +\eta\Delta^2 u+a(t)\Delta\theta=f(t,u), & t>\tau,\ x\in\Omega,\\
\theta_t-\kappa\Delta \theta-a(t)\Delta u_t=0, & t>\tau,\ x\in\Omega,
\end{cases}
\]
subject to boundary conditions
\[
u=\Delta u=\theta=0,\ t>\tau,\ x\in\partial\Omega,
\]
where $\Omega$ is a bounded domain in $\mathbb{R}^N$ with $N\geqslant 2$, which boundary $\partial\Omega$ is assumed to be a $\mathcal{C}^4$-hypersurface, $\eta>0$ and $\kappa>0$ are constants, $a$ is an H\"older continuous function, $f$ is a dissipative nonlinearity locally Lipschitz in the second variable. 

\vskip .1 in \noindent {\it Mathematical Subject Classification 2010:} 37B35, 34D45, 35B40, 35B41.
\newline {\it Key words and phrases:} Invariant sets, attractors, gradient-like behavior, asymptotic behavior of solutions.
\end{abstract}

\section{Introduction}

This paper is concerned with the qualitative behavior of non-autonomous dynamical systems generated by a non-autonomous thermoelastic plate system, in particular as described by their pullback attractors. The problem that we consider in this paper is the following. Let $\Omega$ be a bounded domain in $\mathbb{R}^N$ with $N\geqslant 2$, which boundary $\partial\Omega$ is assumed to be a $\mathcal{C}^4$-hypersurface. We consider the following initial-boundary value problem
\begin{equation}\label{prob1}
\begin{cases}
u_{tt} +\eta\Delta^2 u+a(t)\Delta\theta=f(t,u),\ & t>\tau,\ x\in\Omega,\\
\theta_t-\kappa\Delta \theta-a(t)\Delta u_t=0,\ & t>\tau,\ x\in\Omega,
\end{cases}
\end{equation}
where $\eta$ and $\kappa$ are positive constants, subject to boundary conditions
\begin{equation*}\label{prob2}
\begin{cases}
u=\Delta u=0,\ t>\tau,\ x\in\partial\Omega,\\
\theta=0,\ t>\tau,\ x\in\partial\Omega,
\end{cases}
\end{equation*}
and initial conditions
\begin{equation}\label{prob3}
u(\tau,x)=u_0(x),\ u_t(\tau,x)=v_0(x)\ \mbox{and}\ \theta(\tau,x)=\theta_0(x),\ x\in\Omega,\ \tau\in\mathbb{R}.
\end{equation}

Here, we assume that there exist positive constants $a_0$ and $a_1$ such that
\begin{equation}\label{eta-c1}
0<a_0\leqslant a(t)\leqslant a_1,\ \forall t\in\mathbb{R}.
\end{equation}
Furthermore, we assume that the function $a$ is $(\beta,C)-$H\"{older} continuous; that is,
\begin{equation}\label{eta-c2}
|a(t)-a(s)|\leqslant C|t-s|^\beta,\ \forall t,s\in\mathbb{R}.
\end{equation}

Below we give conditions under which the non-autonomous problem \eqref{prob1}-\eqref{prob3} is locally and globally well posed in some space that we will specify later. To that end we must assume some growth condition on the nonlinearity $f$.

To obtain the global existence of solutions and the existence of pullback attractor we assume that $f:\mathbb{R}^2\to\mathbb{R}$ is locally Lipschitz in the second variable, and it is a dissipative nonlinearity in the second variable
\begin{equation}\label{Cond_f1}
\limsup_{|s|\to\infty}\dfrac{f(t,s)}{s}<\lambda_1, 
\end{equation}
uniformly in $t\in\mathbb{R}$, where $\lambda_1>0$ is the first eigenvalue of negative Laplacian operator with zero Dirichlet boundary condition. Due to Sobolev embedding we need to assume that the function $f$ satisfies subcritical growth condition; that is,
\begin{equation}\label{Cond_f2}
|f_s(t,s)|\leqslant C(1+|s|^{\rho-1}),\ \forall s\in\mathbb{R},
\end{equation}
where $1\leqslant\rho<\frac{N}{N-4}$, with $N\geqslant 5$, and $C>0$ independent of $t\in\mathbb{R}$. We will justify these restrictions later in the paper. If $N=2,3,4$, we suppose the growth condition \eqref{Cond_f2} with $\rho\geqslant1$.

Although we get global solutions for initial-boundary value problem \eqref{prob1}-\eqref{prob3} in the space $H^2(\Omega)\times L^2(\Omega)\times L^2(\Omega)$ with the nonlinearity $f$ satisfying the growth condition \eqref{Cond_f2} with $1\leqslant\rho<\frac{N+4}{N-4}$ for $N\geqslant 5$, see Section \ref{Sec-Exist}, to study the asymptotic behavior of solutions we only consider the condition \eqref{Cond_f2} with $1\leqslant\rho<\frac{N}{N-4}$ for $N\geqslant 5$, see Section \ref{Sec-Diss}.


The purpose of this paper is to prove, under suitable assumptions, local and global well-posedness (using the uniform sectorial operators theory) of the non-autonomous problem \eqref{prob1}-\eqref{prob3}, the existence of pullback attractors and uniform bounds for these pullback attractors.

It is very well known that the model in \eqref{prob1} describes the small vibrations of a homogeneous, elastic and thermal isotropic Kirchhoff plate. In the literature the initial boundary-value problem \eqref{prob1}-\eqref{prob3} has been extensively discussed for several authors in differents contexts. For instance, Baroun et al. in \cite{BBDM} studied the existence of almost periodic solutions for an evolution system like \eqref{prob1}, Liu and Renardy \cite{LR} proved that the linear semigroup defined by system \eqref{prob1} with $f\equiv0$ with clamped boundary condition for $u$ and Dirichlet boundary condition for $\theta$ is analytic. The typical difficulties in thermoelasticity comes from the boundary condition, which make more complicated the task of getting estimates to show the exponential stability of the solutions or analyticity of the corresponding semigroup. In that direction we have the works of Liu and Zheng \cite{LZ}, Lasiecka and Triggiani \cite{LT} to free - clamped boundary condition. In this last work the authors show the exponential stability and analyticity of the semigroup associated with the system \eqref{prob1}. We refer to the book of Liu and Zheng \cite{LZ1} for a general survey on those topics.

To formulate the non-autonomous problem \eqref{prob1}-\eqref{prob3} in the nonlinear evolution process setting, we introduce some notations. Here, we denote $X=L^2(\Omega)$ and $\Lambda:D(\Lambda)\subset X\to X$ the biharmonic operator defined by $D(\Lambda)=\{u\in H^4(\Omega);\ u=\Delta u=0\ \mbox{on}\ \partial\Omega\}$ and 
\[
\Lambda u=(-\Delta)^2 u,\ \forall u\in D(\Lambda),
\]
then $\Lambda$ is a positive self-adjoint operator in $X$ with compact resolvent and therefore $-\Lambda$ generates a compact analytic semigroup on $X$ (that is, $\Lambda$ is a sectorial operator, in the sense of Henry \cite{H}). Denote by $X^\alpha$, $\alpha > 0$,  the fractional power spaces associated with the operator $\Lambda$; that is, $X^\alpha=D(\Lambda^\alpha)$ endowed with the graph norm.  With this notation, we have $X^{-\alpha}=(X^\alpha)'$ for all $\alpha>0$, see Amann \cite{A} for the characterization of the negative scale. It is of special interest the case $\alpha=\frac12$, since $-\Lambda^{\frac12}$ is the Laplacian operator with homogeneous Dirichlet boundary conditions.

If we denote $v=u_t$, then we can rewrite the non-autonomous problem \eqref{prob1}-\eqref{prob3} in the matrix form 
\begin{equation}\label{Def-Prob}
\begin{cases}
w_t=A_{(a)}(t)w+F(t,w),\ t>\tau,\\
w(\tau)=w_0,\ \tau\in\mathbb{R},
\end{cases} 
\end{equation}
where $w=w(t)$ for all $t\in\mathbb{R}$, and $w_0=w(\tau)$ are given by 
\begin{equation}\label{Def1}
w=\begin{bmatrix} u\\ v\\ \theta\end{bmatrix},\ \mbox{and}\ w_0=\begin{bmatrix} u_0\\ v_0\\ \theta_0\end{bmatrix},
\end{equation}
and, for each $t\in\mathbb{R}$, the unbounded linear operator $A_{(a)}(t):D(A_{(a)}(t))\subset Y\to Y$ is defined by 
\begin{equation*}\label{Def2}
Y=H^2(\Omega)\times L^2(\Omega)\times L^2(\Omega),
\end{equation*}
being phase space of the problem \eqref{prob1}-\eqref{prob3}; the domain of the operator $A_{(a)}(t)$ is defined by the space
\begin{equation}\label{Def3}
D(A_{(a)}(t))=\{u\in H^4(\Omega);\ u=\Delta u=0\ \mbox{on}\ \partial\Omega \}\times H^2(\Omega)\times (H^2(\Omega)\cap H^1_0(\Omega))
\end{equation}
and 
\begin{equation}\label{Def4}
\begin{split}
A_{(a)}(t)\begin{bmatrix} u\\ v\\ \theta\end{bmatrix}&=\begin{bmatrix} 0 & I & 0\\ -\eta\Lambda & 0 & -a(t)\Lambda^\frac{1}{2}\\ 0 & a(t)\Lambda^\frac{1}{2} & \kappa\Lambda^\frac{1}{2} \end{bmatrix}\begin{bmatrix} u\\ v\\ \theta\end{bmatrix}\\
&=\begin{bmatrix} v\\ -\eta\Lambda u - a(t)\Lambda^\frac{1}{2} \theta\\ a(t)\Lambda^\frac{1}{2} v+\kappa\Lambda^\frac{1}{2}\theta\end{bmatrix},
\end{split}
\end{equation}
for all $t\in\mathbb{R}$.

We define the nonlinearity $F$ by
\begin{equation*}\label{Def-F}
F(t,w)=\begin{bmatrix} 0\\ f^e(t,u)\\ 0\end{bmatrix},
\end{equation*}
where $f^e(t,u)$ is  the Nemitski\u{\i} operator associated with $f(t,u)$, $t\in\mathbb{R}$, that is,
\[
f^e(t,u)(x):=f(t,u(x)),\ \forall t\in\mathbb{R}, x\in\Omega.
\]
The map $f^e(t,u)$ is Lipschitz continuous in bounded subsets of $X^{\frac12}$ uniformly in $t\in\mathbb{R}$.

This paper is organized as: In Section 2 we recall concepts and results about problems singularly non-autonomous, including results on existence of pullback attractors. In Section 3 we deal with the linear problem associated \eqref{prob1}-\eqref{prob3}. Section 4 is devoted to study the existence of local and global solutions in some appropriate space. Finally, in Section 5 we present results on dissipativeness of thermoelastic equation and existence of pullback attractors for \eqref{prob1}-\eqref{prob3}.

\section{Singularly non-autonomous abstract problem}

Throughout the paper, $L(\mathcal{Z})$ will denote the space of linear and bounded operators defined in a Banach space $\mathcal{Z}$. Let $\mathcal{A}(t)$, $t\in \mathbb{R}$, be a family of unbounded closed linear operators defined on a fixed dense subspace $D$ of $\mathcal{Z}$. 

\subsection{Singularly non-autonomous abstract linear problem}

Consider the singularly non-autonomous abstract linear parabolic problem of the form
\begin{equation*}
\begin{cases}
\displaystyle \frac{du}{dt} =- \mathcal{A}(t)u,\ t>\tau, \\ 
u(\tau) =u_0\in  D.
\end{cases}
\end{equation*}

We assume that

\begin{enumerate}
\item[$(a)$] The operator $\mathcal{A}(t):D\subset \mathcal{Z}\to \mathcal{Z}$ is a closed densely defined operator (the domain $D$ is fixed) and there is a constant $C>0$ (independent of $t\in \mathbb{R}$) such that
\begin{equation*}
\|(\mathcal{A}(t) + \lambda I)^{-1}\|_{L(\mathcal{Z})} \leqslant \frac{C}{|\lambda| +1}; \ \mbox{for all}\ \lambda\in \mathbb{C} \hbox{ with } {\rm Re}\lambda\geqslant 0.
\end{equation*}
To express this fact we will say that the family $\mathcal{A}(t)$ is \emph{uniformly sectorial}.
\item[$(b)$] There are constants $C>0$ and $\epsilon_0>0$ such that, for any
$t,\tau,s \in {\mathbb R}$,
\begin{equation*}
\|[\mathcal{A}(t)-\mathcal{A}(\tau)]\mathcal{A}^{-1}(s)\|_{L(\mathcal{Z})} \leqslant C(t-\tau)^{\epsilon_0}, \quad \epsilon_0 \in (0,1].
\end{equation*}
To express this fact we will say that the map $\mathbb{R}\ni t\mapsto\mathcal{A}(t)$ is \emph{uniformly H\"{o}lder continuous}.
\end{enumerate}

Denote by $\mathcal{A}_0$ the operator $\mathcal{A}(t_0)$ for some $t_0\in \mathbb{R}$ fixed. If
$\mathcal{Z}^\alpha$ denotes the domain of $\mathcal{A}_0^\alpha$, $\alpha> 0$,  with the graph norm and $\mathcal{Z}^0:=\mathcal{Z}$, denote by $\{\mathcal{Z}^\alpha;\alpha\geqslant  0\}$
the fractional power scale associated with $\mathcal{A}_0$ (see Henry \cite{H}).

From $(a)$, $-\mathcal{A}(t)$ is the generator of an analytic semigroup
$\{e^{-\tau \mathcal{A}(t)}\in L(\mathcal{Z}): \tau \geqslant 0\}$. Using this and the fact that $0\in \rho(\mathcal{A}(t))$, it follows that
\begin{equation*}
\|e^{-\tau \mathcal{A}(t)}\|_{L(\mathcal{Z})} \leqslant C,\ \tau\geqslant 0, \ t\in \mathbb{R},
\end{equation*}
and
\begin{equation*}
\|\mathcal{A}(t)e^{-\tau \mathcal{A}(t)}\|_{L(\mathcal{Z})} \leqslant C\tau^{-1}, \ \tau >0, \ t\in \mathbb{R}. 
\end{equation*}

It follows from $(b)$ that $\|\mathcal{A}(t)\mathcal{A}^{-1}(\tau)\|_{L(\mathcal{Z})} \leqslant C$, $\forall \, (t,\tau ) \in I$, for some $I \subset \mathbb{R}^{2}$ bounded. Also, the semigroup $e^{-\tau \mathcal{A}(t)}$ generated by $-\mathcal{A}(t)$ satisfies the following estimate
\begin{equation*}
\|e^{-\tau \mathcal{A}(t)}\|_{L(\mathcal{Z}^\beta,\mathcal{Z}^\alpha)} \leqslant M\tau^{\beta-\alpha},
\end{equation*}
where $0\leqslant \beta \leqslant \alpha <1+\epsilon_0$.

\medskip

Next we recall the definition of a linear evolution process associated with a family of operators $\{\mathcal{A}(t):t\in \mathbb{R}\}$.
\begin{definition}
A family $\{L(t,\tau): t\geqslant \tau\in {\mathbb R}\}\subset L(\mathcal{Z})$
satisfying
\begin{equation*}
\begin{split}
&1)\ L(\tau,\tau) = I,\\
&2)\ L(t,\sigma)L(\sigma,\tau)=L(t,\tau), \ \hbox{for any }\; t\geqslant\sigma\geqslant \tau, \\
&3)\ \mathcal{P}\times \mathcal{Z} \ni ((t,\tau),u_0)\mapsto L(t,\tau)v_0\in \mathcal{Z} \;\;\hbox{is continuous, where }
\mathcal{P}=\{(t,\tau)\in \mathbb{R}^2: t\geqslant \tau\}
\end{split}
\end{equation*}
is called a \emph{linear evolution process} (\emph{process} for short)
or \emph{family of evolution operators}.
\end{definition}

If the operator $\mathcal{A}(t)$ is uniformly sectorial and uniformly H\"{o}lder continuous, then  there exists a linear evolution process $\{L(t,\tau): t\geqslant \tau
\in \mathbb{R}\}$ associated with $\mathcal{A}(t)$, which is given by
\[
L(t,\tau) = e^{-(t-\tau)\mathcal{A}(\tau)} + \int_{\tau}^{t}L(t,s)[\mathcal{A}(\tau) -
\mathcal{A}(s)]e^{-(s-\tau)\mathcal{A}(\tau)}ds.
\]
The evolution process $\{U(t,\tau): t\geqslant \tau \in \mathbb{R}\}$ satisfies the following condition:
\begin{equation}\label{1.65}
\|L(t,\tau)\|_{\mathcal{L}(\mathcal{Z}^\beta,\mathcal{Z}^\alpha)} \leqslant C(\alpha,\beta)(t-\tau)^{\beta-\alpha},
\end{equation}
where $0\leqslant \beta\leqslant \alpha <1+\epsilon_0$. For more details see \cite{CN} and \cite{sobol}.

\subsection{Existence of pullback attractors}

In this subsection we will present basic definitions and results of the theory of pullback attractors for nonlinear evolution process. For more details we refer to \cite{CCLR}, \cite{TLR}, \cite{CLR} and \cite{ChV}.

We consider the singularly non-autonomous abstract parabolic problem
\begin{equation}\label{po}
\begin{cases}
\displaystyle \frac{du}{dt} =- \mathcal{A}(t)u+g(t,u),\ t>\tau, \\ 
u(\tau) =u_0\in  D,
\end{cases}
\end{equation}
where the operator $\mathcal{A}(t)$ is uniformly sectorial and uniformly H\"{o}lder continuous	and the nonlinearity $g$ satisfies conditions which will be specified later.  The nonlinear evolution process $\{S(t,\tau): t\geqslant \tau\in \mathbb{R}\}$ associated with $\mathcal{A}(t)$ is given by
\[
S(t,\tau) = L(t,\tau)+\int_\tau^t L(t,s)g(s,L(s,\tau))ds,\ \forall t\geqslant\tau.
\]

\begin{definition} 
Let $g: \mathbb{R}\times X^\alpha \to X^{\beta}$, $\alpha \in [\beta, \beta+1)$
be a continuous function. We say that a continuous function
$u:[\tau,\tau+t_0] \to X^\alpha $ is a $($\emph{local}$)$ \emph{solution}
of  \eqref{po} starting in $ u_0 \in X^\alpha$,
if $u \in C([\tau,\tau+t_0], X^\alpha) \cap C^1((\tau,\tau+t_0],
X^{\alpha})$, $u(\tau)=u_0$, $u(t) \in D(\mathcal{A}(t))$
for all $t \in (\tau,\tau+t_0]$ and \eqref{po}
is satisfied for all $t \in (\tau,\tau+t_0)$.
\end{definition}

We can now state the following result, proved in
\cite[Theorem 3.1]{CN}.

\begin{theorem}\label{teo:existunicsol}
Suppose that the family of operators $\mathcal{A}(t)$
is uniformly sectorial and uniformly H\"older continuous
in $X^{\beta}$. If $g:\mathbb{R}\times X^\alpha \to X^{\beta}$,
$\alpha \in [\beta, \beta +1)$, is a Lipschitz continuous
map in bounded subsets of $X^\alpha$, then, given $r > 0$,
there is a time $t_0 > 0$ such that for all
$u_0 \in B_{X^\alpha }(0;r)$ there exists a unique solution
of the problem \eqref{po} starting in $u_0$ and defined
in $[\tau,\tau+t_0]$. Moreover, such solutions are continuous
with respect the initial data in $B_{X^\alpha }(0;r)$.
\end{theorem}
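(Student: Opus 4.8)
The plan is to recast \eqref{po} as the integral (mild solution) equation
$$ u(t) = L(t,\tau)u_0 + \int_\tau^t L(t,s)\, g(s,u(s))\, ds, \qquad t\in[\tau,\tau+t_0], $$
and to produce the solution as the unique fixed point of the associated map $\mathcal{T}$ by the contraction principle, following Henry's treatment of the autonomous case but with the semigroup replaced by the linear evolution process $L(t,\tau)$ and its smoothing estimate \eqref{1.65}. First I would fix $r>0$, set $R:=2C(\alpha,\alpha)r$, and work in the complete metric space $K$ of functions $u\in C([\tau,\tau+t_0],X^\alpha)$ with $\sup_{t}\|u(t)\|_{X^\alpha}\leqslant R$, equipped with the sup norm. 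Since $g$ is Lipschitz on bounded subsets of $X^\alpha$ uniformly in $t$, it is bounded on the ball of radius $R$, say $\|g(s,z)\|_{X^\beta}\leqslant M$ for $\|z\|_{X^\alpha}\leqslant R$; the two basic ingredients are then $\|L(t,\tau)u_0\|_{X^\alpha}\leqslant C(\alpha,\alpha)\|u_0\|_{X^\alpha}$ and $\|L(t,s)\|_{L(X^\beta,X^\alpha)}\leqslant C(\alpha,\beta)(t-s)^{\beta-\alpha}$, the latter being integrable in $s$ precisely because $\alpha-\beta<1$.

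The self-mapping step then follows from
$$ \|(\mathcal{T}u)(t)\|_{X^\alpha}\leqslant C(\alpha,\alpha)r + C(\alpha,\beta)M\int_\tau^t (t-s)^{\beta-\alpha}\,ds = C(\alpha,\alpha)r + C(\alpha,\beta)M\,\frac{(t-\tau)^{1+\beta-\alpha}}{1+\beta-\alpha}, $$
so that choosing $t_0$ small makes the right-hand side $\leqslant R$ for every $u_0\in B_{X^\alpha}(0;r)$ simultaneously, which is what yields a single $t_0$ valid for the whole ball. The contraction step is identical in spirit: using the uniform Lipschitz constant $L_g$ of $g$ on $B_{X^\alpha}(0;R)$,
$$ \|(\mathcal{T}u)(t)-(\mathcal{T}v)(t)\|_{X^\alpha}\leqslant C(\alpha,\beta)L_g\,\frac{t_0^{1+\beta-\alpha}}{1+\beta-\alpha}\,\sup_{s}\|u(s)-v(s)\|_{X^\alpha}, $$
and after possibly shrinking $t_0$ the constant is $<1$. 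The Banach fixed point theorem then delivers a unique $u\in K$ solving the integral equation; global uniqueness among local solutions will follow a posteriori from the continuous dependence estimate below.

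It remains to upgrade the mild solution to a genuine solution in the sense of the definition, i.e. to show $u\in C^1((\tau,\tau+t_0],X^\alpha)$, $u(t)\in D(\mathcal{A}(t))$ and that \eqref{po} holds pointwise. The route is to first prove that $t\mapsto g(t,u(t))$ is locally H\"older continuous on $(\tau,\tau+t_0]$: this combines the H\"older continuity of $u$ in $X^\alpha$ away from $\tau$ (a consequence of the smoothing estimate applied to the integral term) with the local Lipschitz continuity of $g$ in its second argument and the H\"older dependence of $g$ on $t$. Once the inhomogeneity is H\"older, the parabolic regularity for the evolution process --- built on hypotheses $(a)$ and $(b)$ and the representation of $L(t,\tau)$ through $e^{-(t-\tau)\mathcal{A}(\tau)}$ --- allows differentiation under the integral sign and shows the equation is satisfied classically, in analogy with the H\"older-inhomogeneity argument of the autonomous theory. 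I expect this regularity bootstrap to be the main obstacle, since it is where the singular non-autonomous structure of $\mathcal{A}(t)$ genuinely enters and where the interplay between the time-H\"older continuity of $\mathcal{A}(\cdot)$ and of $g$ must be controlled carefully.

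Finally, continuous dependence on the initial data follows by subtracting the integral equations for two solutions $u,\tilde u$ with data $u_0,\tilde u_0\in B_{X^\alpha}(0;r)$, estimating
$$ \|u(t)-\tilde u(t)\|_{X^\alpha}\leqslant C(\alpha,\alpha)\|u_0-\tilde u_0\|_{X^\alpha} + C(\alpha,\beta)L_g\int_\tau^t (t-s)^{\beta-\alpha}\|u(s)-\tilde u(s)\|_{X^\alpha}\,ds, $$
and invoking a singular Gronwall inequality (the kernel $(t-s)^{\beta-\alpha}$ has an integrable singularity because $\alpha-\beta<1$) to conclude $\|u(t)-\tilde u(t)\|_{X^\alpha}\leqslant C\,\|u_0-\tilde u_0\|_{X^\alpha}$ on $[\tau,\tau+t_0]$. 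Taking $\tilde u_0=u_0$ in this estimate simultaneously yields the uniqueness of the solution among all local solutions, completing the argument.
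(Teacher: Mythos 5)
First, a point of reference: the paper does not prove this theorem at all --- it is quoted from \cite[Theorem 3.1]{CN}, so the only available comparison is with that source. Your proposal reconstructs essentially the argument given there: recast \eqref{po} as the variation-of-constants equation with the linear evolution process in place of a semigroup, run a contraction in $C([\tau,\tau+t_0],X^\alpha)$ using the smoothing estimate \eqref{1.65}, whose singularity $(t-s)^{\beta-\alpha}$ is integrable precisely because $\alpha-\beta<1$, choose $t_0$ uniformly on $B_{X^\alpha}(0;r)$, and obtain continuous dependence (hence uniqueness among all $X^\alpha$-continuous local solutions, which on a compact interval are bounded) from a Gronwall inequality with integrable singular kernel. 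All of this is correct, and as a side remark your integral equation is the right one --- the paper's own display for the nonlinear process, $S(t,\tau)=L(t,\tau)+\int_\tau^t L(t,s)g(s,L(s,\tau))ds$, contains a typo where $g(s,S(s,\tau)u_0)$ is meant.

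The one genuine soft spot is the mild-to-strict upgrade, and you have in fact smuggled in a hypothesis to make it work. The solution concept here is classical: $u\in C^1((\tau,\tau+t_0],X^\alpha)$, $u(t)\in D(\mathcal{A}(t))$, and \eqref{po} holds pointwise. Your bootstrap correctly identifies that this requires local H\"older continuity of $s\mapsto g(s,u(s))$, but you then invoke ``the H\"older dependence of $g$ on $t$,'' which is not among the stated hypotheses: the theorem only asserts that $g$ is Lipschitz on bounded subsets of $X^\alpha$, i.e.\ in the second variable. If $g$ is merely continuous in $t$, the composition $s\mapsto g(s,u(s))$ need not be H\"older and the upgrade genuinely fails --- already for the linear autonomous problem $u'=-\mathcal{A}u+h(t)$ with $h$ continuous but not H\"older, mild solutions need not be differentiable. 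The way out, which is what \cite{CN} effectively uses, is to read the Lipschitz hypothesis as holding jointly in $(t,u)$ on bounded sets (the application in this paper is consistent with that reading, since the time dependence enters through the H\"older function $a$). Since you explicitly flagged this step as the main obstacle, it is a conscious assumption rather than an oversight, but as written your proof closes only under that strengthened hypothesis. A final micro-point: continuity of the fixed point at $t=\tau$ requires strong continuity of $L(t,\tau)$ as $t\downarrow\tau$ in the $X^\alpha$-topology, not just in $\mathcal{Z}$ as in the definition of a process; this is true in the present framework but deserves a word in a complete write-up.
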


We start remembering the definition of Hausdorff semi-distance
between two subsets $A$ and $B$ of a metric space $(X,d)$:
\[
\operatorname{dist}_H(A,B) = \sup_{a\in A} \inf_{b\in B} d(a,b).
\]

Next we present several definitions about theory of pullback attractors, which can be founded in \cite{CCLR, CLR, ChV}.

\begin{definition} 
Let $\{S(t,\tau):t\geqslant \tau\in {\mathbb R}\}$ be an evolution
process in a metric space $X$. Given  $A$ and $B$ subsets of $X$, we
say that $A$ \emph{pullback attracts} $B$ at time $t$ if
$$
\lim_{\tau \to -\infty} \operatorname{dist}_H(S(t,\tau)B,A)= 0,
$$
where $S(t,\tau)B:= \{S(t,\tau)x \in X : x \in B\}$.
\end{definition}

\begin{definition} 
The \emph{pullback orbit} of a subset $B \subset X$ relatively
to the evolution process
$\{S(t,\tau):t\geqslant \tau\in {\mathbb R}\}$ in the time
$t \in \mathbb{R}$ is defined by
$\gamma_p(B,t):= \cup_{\tau \leqslant t} S(t,\tau)B $.
\end{definition}

\begin{definition} 
An evolution process $\{S(t,\tau): t \geqslant \tau \in {\mathbb R}\}$ in $X$
is \emph{pullback strongly bounded} if, for each $t\in \mathbb{R}$ and
each bounded subset $B$ of $X$,
$\cup_{\tau\leqslant t} \gamma_p(B,\tau)$ is bounded.
\end{definition}

\begin{definition} 
An evolution process $\{S(t,\tau):t\geqslant \tau\in {\mathbb
R}\}$ in $X$ is \emph{pullback asymptotically compact} if, for each
$t \in \mathbb{R}$, each sequence $\{\tau_n\}$ in $(-\infty, t]$ with
$\tau_n\to -\infty$ as $n\to\infty$ and each
bounded sequence $\{x_n\}$ in $X$ such that
$\{S(t,\tau_n)x_n\} \subset X $ is bounded, the sequence
$\{S(t,\tau_n)x_n\}$ is relatively compact in $X$.
\end{definition}

\begin{definition} 
We say that a family of bounded subsets $\{B(t): t\in \mathbb{R}\}$ of
$X$ is \emph{pullback absorbing} for the evolution process
$\{S(t,\tau):t\geqslant \tau\in \mathbb{R}\}$, if for each $t\in \mathbb{R}$ and for
any bounded subset $B$ of $X$, there exists $\tau_0(t,B)\leqslant t$
such that
$$
S(t,\tau)B\subset B(t) \quad \mbox{ for all  }
 \tau \leqslant \tau_0(t,B).$$
\end{definition}

\begin{definition} 
A family of subsets $\{{\mathbb A}(t):t\in \mathbb{R}\}$ of $X$ is
 called a \emph{pullback attractor} for the evolution process
$\{S(t,\tau): t\geqslant \tau\in {\mathbb R}\}$ if it is invariant (that is, $S(t,\tau) \mathbb{A}(\tau)
= \mathbb{A}(t)$,
for any $t\geqslant \tau$), $\mathbb{A}(t)$ is compact for all $t\in \mathbb{R}$, and pullback attracts
bounded subsets of $X$ at time $t$, for each $t\in \mathbb{R}$.
\end{definition}

In applications, to prove that a process has a pullback attractor
we use the Theorem \ref{theorem pullback}, proved in \cite{CCLR},
which gives a sufficient condition for existence of a compact
pullback attractor. For this, we will need the concept of pullback
strongly bounded dissipativeness.

\begin{definition} 
An evolution process $\{S(t,\tau):t\geqslant \tau \in \mathbb{R}\}$
in $X$ is \emph{pullback strongly bounded dissipative} if, for
each $t\in \mathbb{R}$, there is a bounded subset $B(t)$ of $X$
which pullback absorbs bounded subsets of $X$ at time $s$ for
each $s\leqslant t$; that is, given a bounded subset $B$ of $X$
and $s\leqslant t$, there exists $\tau_0(s,B)$ such that
$S(s,\tau)B \subset B(t)$, for all $\tau\leqslant \tau_0(s,B)$.
\end{definition}

Now we can present the result which guarantees the existence
of pullback attractors for non-\-autonomous problems, see \cite{CCLR}.

\begin{theorem}\label{theorem pullback}
If an evolution process $\{S(t,\tau): t\geqslant \tau\in {\mathbb R}\}$
 in the metric space $X$ is pullback strongly bounded dissipative
and pullback asymptotically compact, then $\{S(t,\tau):t\geqslant
\tau\in {\mathbb R}\}$ has a pullback attractor
$\{\mathbb{A}(t): t\in \mathbb{R}\}$ with the property that
$\underset{\tau \leqslant t } \cup \mathbb{A}(\tau)$ is bounded for
each $t\in \mathbb{R}$.
\end{theorem}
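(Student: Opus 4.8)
The plan is to construct the pullback attractor explicitly as a family of pullback $\omega$-limit sets of the absorbing family and then verify the defining properties (nonemptiness, compactness, invariance, attraction) together with the stated uniform boundedness. First I would record the family $\{B(t):t\in\mathbb{R}\}$ supplied by pullback strongly bounded dissipativeness: for each fixed $t$, the single bounded set $B(t)$ pullback absorbs bounded subsets of $X$ at \emph{every} time $s\leqslant t$. With this, for each $t$ I define the candidate
\[
\mathbb{A}(t):=\omega(B(t),t)=\bigcap_{s\leqslant t}\overline{\bigcup_{\tau\leqslant s}S(t,\tau)B(t)},
\]
and I would use the standard sequential characterization: $y\in\mathbb{A}(t)$ if and only if there exist $\tau_n\to-\infty$ and $x_n\in B(t)$ with $S(t,\tau_n)x_n\to y$.

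Next I would establish nonemptiness and compactness. Because $B(t)$ absorbs itself at time $t$, any sequence $S(t,\tau_n)x_n$ with $x_n\in B(t)$ and $\tau_n\to-\infty$ is eventually contained in $B(t)$, hence bounded; pullback asymptotic compactness then yields a convergent subsequence, so $\mathbb{A}(t)\neq\emptyset$. Compactness follows from a diagonal argument: any sequence in $\mathbb{A}(t)$ is approximated by points of the form $S(t,\tau_n)x_n$, and a further application of pullback asymptotic compactness produces a limit which, by the characterization above, lies in $\mathbb{A}(t)$, so $\mathbb{A}(t)$ is closed and precompact, thus compact.

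Then I would prove attraction of an arbitrary bounded set $B$. Arguing by contradiction, suppose there are $\varepsilon>0$, $\tau_n\to-\infty$ and $x_n\in B$ with $\operatorname{dist}_H(S(t,\tau_n)x_n,\mathbb{A}(t))\geqslant\varepsilon$. Since $B(t)$ absorbs $B$ at time $t$, the sequence $S(t,\tau_n)x_n$ is bounded, so a subsequence converges to some $y$. The crucial point is to show $y\in\mathbb{A}(t)$: I would pick intermediate times $\sigma_k\to-\infty$ with $\tau_{n_k}\leqslant\tau_0(\sigma_k,B)$ and write $S(t,\tau_{n_k})x_{n_k}=S(t,\sigma_k)\,b_k$ with $b_k:=S(\sigma_k,\tau_{n_k})x_{n_k}\in B(t)$, where the membership $b_k\in B(t)$ uses exactly the fact that $B(t)$ absorbs $B$ at the \emph{earlier} time $\sigma_k$ (this is where strong bounded dissipativeness, as opposed to mere dissipativeness, is indispensable). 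The characterization then gives $y\in\mathbb{A}(t)$, contradicting the choice of $\varepsilon$.

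Finally I would verify invariance and the uniform bound. For $S(t,\tau)\mathbb{A}(\tau)\subseteq\mathbb{A}(t)$ I use continuity of the process together with the evolution property $S(t,\tau)S(\tau,\tau_n)=S(t,\tau_n)$ applied to approximating sequences, noting that the resulting limits lie in $\omega(B(\tau),t)\subseteq\mathbb{A}(t)$ by the attraction already established; for the reverse inclusion $\mathbb{A}(t)\subseteq S(t,\tau)\mathbb{A}(\tau)$ I would, given $y=\lim S(t,\tau_n)x_n\in\mathbb{A}(t)$, use boundedness of the backward images $S(\tau,\tau_n)x_n$ and pullback asymptotic compactness to extract a subsequential limit $z\in\omega(B(t),\tau)\subseteq\mathbb{A}(\tau)$ satisfying $S(t,\tau)z=y$. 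The uniform boundedness of $\bigcup_{\tau\leqslant t}\mathbb{A}(\tau)$ follows because, for every $\tau\leqslant t$, strong bounded dissipativeness forces $\mathbb{A}(\tau)\subseteq\overline{B(t)}$, the same bounded set $B(t)$ absorbing at all times $s\leqslant t$. I expect the main obstacle to be the reverse invariance inclusion, since constructing the backward pre-image requires a careful extraction of limits along the process and is the step most sensitive to the hypotheses.
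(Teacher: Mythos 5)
The paper does not prove this theorem at all: it is quoted verbatim from the reference [CCLR] (Caraballo, Carvalho, Langa, Rivero), and your construction $\mathbb{A}(t)=\omega(B(t),t)$ of pullback $\omega$-limit sets of the strongly absorbing family --- with the intermediate-time trick $S(t,\tau_{n_k})x_{n_k}=S(t,\sigma_k)S(\sigma_k,\tau_{n_k})x_{n_k}$, $S(\sigma_k,\tau_{n_k})x_{n_k}\in B(t)$, exploiting precisely that the single set $B(t)$ absorbs at every time $s\leqslant t$ --- is exactly the standard argument of that cited proof. Your proposal is correct, including the delicate reverse invariance inclusion via backward extraction of limits and the uniform bound $\bigcup_{\tau\leqslant t}\mathbb{A}(\tau)\subseteq\overline{B(t)}$, so there is nothing to add beyond routine details (e.g.\ justifying $\omega(B(t),\tau)\subseteq\mathbb{A}(\tau)$ from the already-established attraction and compactness at time $\tau$).
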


The next result gives sufficient conditions for pullback
asymptotic compactness, and its proof can be found in \cite{CCLR}.

\begin{theorem}\label{pull-asym-comp}
Let $\{S(t,s): t\geqslant s \in {\mathbb R}\}$ be a pullback strongly bounded
evolution process such that $S(t,s) = L(t,s) + U(t,s)$,
where there exist a non-increasing
function $k: \mathbb{R}^{+} \times \mathbb{R}^{+} \to \mathbb{R}$,
with $k(\sigma,r)\to 0$ when $\sigma \to \infty$, and for all
$s\leqslant t$ and $x\in X$ with $\|x\| \leqslant r$,
$\|L(t,s)x\| \leqslant k(t-s,r)$, and $U(t,s)$ is compact. Then, the family of evolution
process $\{S(t,s): t\geqslant s \in {\mathbb R}\}$ is pullback asymptotically compact.
\end{theorem}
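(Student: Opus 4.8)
The plan is to verify the definition of pullback asymptotic compactness directly. Fix $t\in\mathbb{R}$, a sequence $\{\tau_n\}$ in $(-\infty,t]$ with $\tau_n\to-\infty$, and a bounded sequence $\{x_n\}$, say $\|x_n\|\leqslant r$, such that $\{S(t,\tau_n)x_n\}$ is bounded. I must extract a convergent subsequence; since $X$ is complete, it suffices to show that $\{S(t,\tau_n)x_n\}$ is totally bounded. First I would record a uniform a priori bound: setting $B=\{x_n:n\in\mathbb{N}\}$, pullback strong boundedness furnishes a constant $R>0$, independent of the indices, such that every intermediate state $S(\sigma,\tau_n)x_n$ with $\tau_n\leqslant\sigma\leqslant t$ lies in the ball of radius $R$, because all such states belong to $\cup_{\sigma\leqslant t}\gamma_p(B,\sigma)$, which is bounded by hypothesis.

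The heart of the argument is a fixed-intermediate-time decomposition. Given $\epsilon>0$, I would use $k(\sigma,R)\to 0$ as $\sigma\to\infty$ to select a \emph{single} time $\sigma=\sigma(\epsilon)<t$ with $k(t-\sigma,R)<\epsilon/2$. For every $n$ large enough that $\tau_n\leqslant\sigma$, the evolution property gives
\[
S(t,\tau_n)x_n=S(t,\sigma)y_n,\qquad y_n:=S(\sigma,\tau_n)x_n,
\]
with $\|y_n\|\leqslant R$ by the previous step. Applying the decomposition $S=L+U$ at the now-fixed pair $(t,\sigma)$ yields
\[
S(t,\tau_n)x_n=L(t,\sigma)y_n+U(t,\sigma)y_n,
\]
where $\|L(t,\sigma)y_n\|\leqslant k(t-\sigma,R)<\epsilon/2$, and $\{U(t,\sigma)y_n\}_n$ is relatively compact since $U(t,\sigma)$ is a compact operator acting on the bounded set $\{y_n\}$.

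Finally I would assemble a finite $\epsilon$-net. As $\{U(t,\sigma)y_n\}_n$ is totally bounded it admits a finite $\epsilon/2$-net; adjoining the finitely many initial terms $S(t,\tau_n)x_n$ with $\tau_n>\sigma$ produces a finite $\epsilon$-net for the entire sequence. Since $\epsilon>0$ is arbitrary, $\{S(t,\tau_n)x_n\}$ is totally bounded, hence relatively compact, which is precisely pullback asymptotic compactness.

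The main obstacle is exactly the one this decomposition is designed to circumvent: because the compact part $U(t,\tau_n)$ changes with $n$, compactness of the individual operators does \emph{not} by itself deliver relative compactness of $\{U(t,\tau_n)x_n\}$. Routing through a fixed pair $(t,\sigma)$ via the evolution property (legitimate here since $\tau_n\to-\infty$, so eventually $\tau_n\leqslant\sigma$), and pushing the varying remainder into the arbitrarily small $L$-term, is what makes the single compact operator $U(t,\sigma)$ usable; pullback strong boundedness is the ingredient that keeps $\|y_n\|$ controlled uniformly so that the decay estimate $k(t-\sigma,R)$ can be invoked with an index-independent radius.
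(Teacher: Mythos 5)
Your proof is correct, and it is essentially the standard argument for this statement: the paper itself gives no proof, deferring to \cite{CCLR}, and your route --- a uniform radius $R$ from pullback strong boundedness, factoring through a fixed intermediate time $\sigma$ via $S(t,\tau_n)x_n = L(t,\sigma)y_n + U(t,\sigma)y_n$ with $\|L(t,\sigma)y_n\|\leqslant k(t-\sigma,R)$ small and a \emph{single} compact operator $U(t,\sigma)$ --- is exactly the decomposition used there. The only cosmetic difference is the finish: you conclude by total boundedness with finite $\varepsilon$-nets (correctly noting this needs completeness of $X$), whereas the cited proof picks a sequence of intermediate times $\sigma_k$ with $k(t-\sigma_k,R)\to 0$ and runs a diagonal extraction to produce a Cauchy subsequence; the two endings are interchangeable.
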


\section{Linear Analysis}

In this section we consider the linear problem associated with \eqref{prob1}-\eqref{prob3}, in this case we consider the singularly non-autonomous linear parabolic problem
\begin{equation*}\label{Def-Prob-Lin}
\begin{cases}
w_t=A_{(a)}(t)w,\ t>\tau,\\
w(\tau)=w_0,\ \tau\in\mathbb{R},
\end{cases}
\end{equation*}
where $w$, $w_0$ are defined in \eqref{Def1} and the linear unbounded operador $A_{(a)}$ is defined by \eqref{Def3}-\eqref{Def4}.

It is not difficult to see that $\det(A_{(a)}(t))=\eta\kappa\Lambda^{\frac{3}{2}}$ and $0\in\rho(A_{(a)}(t))$ for any $t\in\mathbb{R}$. Moreover we have that the operator $A^{-1}_{(a)}(t): D(A^{-1}_{(a)}(t))\to Y$ is defined by
\begin{equation*}\label{Defff_1}
D(A^{-1}_{(a)}(t))=L^2(\Omega)\times H^{-2}(\Omega)\times H^{-2}(\Omega)
\end{equation*}
and
\begin{equation*}\label{Defff_2}
\begin{split}
A^{-1}_{(a)}(t)\begin{bmatrix} u\\ v \\ \theta \end{bmatrix}&=\begin{bmatrix} 
\dfrac{1}{\eta\kappa}(a(t))^2\Lambda^{-\frac{1}{2}} & -\dfrac{1}{\eta}\Lambda^{-1}  & -\dfrac{1}{\eta\kappa}a(t)\Lambda^{-1}\\ \vspace{-0.3cm} \\
I & 0 & 0\\ \vspace{-0.3cm} \\
-\dfrac{1}{\kappa} a(t)I & 0 & \dfrac{1}{\kappa}\Lambda^{-\frac{1}{2}} \end{bmatrix}\begin{bmatrix} u\\ v \\ \theta \end{bmatrix}\\
&=\begin{bmatrix} 
\dfrac{1}{\eta\kappa}(a(t))^2\Lambda^{-\frac{1}{2}} u -\dfrac{1}{\eta}\Lambda^{-1}  v -\dfrac{1}{\eta\kappa}a(t)\Lambda^{-1} \theta\\  \vspace{-0.3cm} \\
u \\ \vspace{-0.3cm} \\
 -\dfrac{1}{\kappa}a(t)u+\dfrac{1}{\kappa}\Lambda^{-\frac{1}{2}}\theta \end{bmatrix},
\end{split}
\end{equation*}
for all $t\in\mathbb{R}$.

\begin{proposition}
Denote by $\mathcal{H}$ the extrapolation space of $Y=H^2(\Omega)\times L^{2}(\Omega)\times L^{2}(\Omega)$ generated by operator $A_{a}^{-1}(t)$. The following equality holds
\[
\mathcal{H}=L^2(\Omega)\times H^{-2}(\Omega)\times H^{-2}(\Omega).
\]
\end{proposition}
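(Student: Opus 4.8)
The plan is to read off $\mathcal{H}$ directly from the definition of the extrapolation space, namely as the completion of $Y$ with respect to the norm
\[
\|w\|_{-1}:=\|A_{(a)}^{-1}(t)w\|_Y,
\]
using the explicit formula for $A_{(a)}^{-1}(t)$ displayed above. First I would observe that, thanks to \eqref{eta-c1}, the coefficients $a(t)$ and $a(t)^2$ are bounded above and below uniformly in $t$, so that the norms $\|\cdot\|_{-1}$ associated with different values of $t$ are mutually equivalent; hence $\mathcal{H}$ is well defined and independent of the $t$ fixed in the construction. I would also recall that $-\Lambda^{\frac{1}{2}}$ is the Dirichlet Laplacian, so that on the first factor $X^{\frac{1}{2}}=D(\Lambda^{\frac{1}{2}})=H^2(\Omega)\cap H^1_0(\Omega)$ the quantity $\|\Lambda^{\frac{1}{2}}\cdot\|_{L^2}$ is equivalent to the $H^2$-norm.

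Next I would compute the $Y$-norm of $A_{(a)}^{-1}(t)w$ componentwise for $w=(u,v,\theta)$. Applying $\Lambda^{\frac{1}{2}}$ to the first component converts its $H^2$-norm into
\[
\left\|\frac{a(t)^2}{\eta\kappa}\,u-\frac{1}{\eta}\Lambda^{-\frac{1}{2}}v-\frac{a(t)}{\eta\kappa}\Lambda^{-\frac{1}{2}}\theta\right\|_{L^2};
\]
the second component contributes exactly $\|u\|_{L^2}$, while the third contributes
\[
\left\|-\frac{a(t)}{\kappa}\,u+\frac{1}{\kappa}\Lambda^{-\frac{1}{2}}\theta\right\|_{L^2}.
\]
Thus $\|w\|_{-1}$ is comparable to the sum of these three $L^2$-quantities.

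The key step, and the main (though mild) obstacle, is to show that this norm is equivalent to
\[
\|u\|_{L^2}+\|\Lambda^{-\frac{1}{2}}v\|_{L^2}+\|\Lambda^{-\frac{1}{2}}\theta\|_{L^2}=\|u\|_{L^2}+\|v\|_{X^{-\frac{1}{2}}}+\|\theta\|_{X^{-\frac{1}{2}}}.
\]
The upper bound is immediate from the triangle inequality. For the lower bound I would exploit the triangular structure of $A_{(a)}^{-1}(t)$ as a cascade: the second component already controls $\|u\|_{L^2}$; subtracting the $u$-contribution from the third component and using $a(t)\geqslant a_0>0$ recovers $\|\Lambda^{-\frac{1}{2}}\theta\|_{L^2}$; finally subtracting the now-controlled $u$- and $\theta$-contributions from the first component recovers $\|\Lambda^{-\frac{1}{2}}v\|_{L^2}$. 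The care here lies in tracking the coefficients so that no cancellation destroys coercivity, which is guaranteed precisely by the uniform positivity and boundedness of $a$.

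Finally I would pass to completions factor by factor. Since $H^2(\Omega)$ is dense in $L^2(\Omega)$, the completion of the first factor under $\|\cdot\|_{L^2}$ is $L^2(\Omega)$; and since $L^2(\Omega)$ is dense in $X^{-\frac{1}{2}}$, the completions of the second and third factors under $\|\cdot\|_{X^{-\frac{1}{2}}}$ are $X^{-\frac{1}{2}}$. Invoking the identification $X^{-\frac{1}{2}}=(X^{\frac{1}{2}})'$ of the negative scale (Amann \cite{A}), together with $X^{\frac{1}{2}}=H^2(\Omega)\cap H^1_0(\Omega)$, gives $X^{-\frac{1}{2}}=H^{-2}(\Omega)$. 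Combining the three factors yields $\mathcal{H}=L^2(\Omega)\times H^{-2}(\Omega)\times H^{-2}(\Omega)$, which is exactly the asserted description (and matches the domain of $A_{(a)}^{-1}(t)$ recorded before the statement).
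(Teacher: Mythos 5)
Your proposal is correct and follows essentially the same route as the paper: both establish the two-sided equivalence between $\|A_{(a)}^{-1}(t)\cdot\|_{Y}$ and the norm of $L^2(\Omega)\times H^{-2}(\Omega)\times H^{-2}(\Omega)$, with the upper bound by the triangle inequality and the lower bound by exactly your cascade (second component controls $\|u\|_{L^2}$, third then recovers $\|\Lambda^{-\frac{1}{2}}\theta\|_{L^2}$, and the first, after applying $\Lambda^{\frac{1}{2}}$, recovers $\|\Lambda^{-\frac{1}{2}}v\|_{L^2}$), before passing to completions. One cosmetic remark: in the subtraction steps only the upper bound $a(t)\leqslant a_1$ from \eqref{eta-c1} is actually used (the coefficients of the recovered terms are the fixed constants $\frac{1}{\kappa}$ and $\frac{1}{\eta}$), so the appeal to $a(t)\geqslant a_0$ for coercivity is unnecessary, though harmless.
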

\proof Recall first that $\mathcal{H}$ is the completion of the normed space $(\mathcal{H},\|A_{a}^{-1}(t)\cdot\|_{\mathcal{H}})$. Since there are positive constants $C_1$ and $C_2$ such that for any $( u, v , \theta )\in L^2(\Omega)\times H^{-2}(\Omega)\times H^{-2}(\Omega)$ we have that
\begin{equation*}
\left\| A_{a}^{-1}(t)\begin{bmatrix} u\\ v \\ \theta \end{bmatrix}\right\|_{H^2(\Omega)\times L^{2}(\Omega)\times L^{2}(\Omega)}\leqslant C_1\left\| \begin{bmatrix} u\\ v \\ \theta \end{bmatrix}\right\|_{L^2(\Omega)\times H^{-2}(\Omega)\times H^{-2}(\Omega)},
\end{equation*}
and
\begin{equation*}
\left\| \begin{bmatrix} u\\ v \\ \theta \end{bmatrix}\right\|_{L^2(\Omega)\times H^{-2}(\Omega)\times H^{-2}(\Omega)}\leqslant C_2\left\| A_{a}^{-1}(t)\begin{bmatrix} u\\ v \\ \theta \end{bmatrix}\right\|_{H^2(\Omega)\times L^{2}(\Omega)\times L^{2}(\Omega)}.
\end{equation*}

Below is the proof of this last statement. Let $[u\ \ v\ \ \theta]^T\in L^2(\Omega)\times H^{-2}(\Omega)\times H^{-2}(\Omega)$, and note that

\begin{equation*}
\begin{split}
&\left\|A^{-1}_{(a)}(t)\begin{bmatrix} u \\ v \\ \theta\end{bmatrix}\right\|_{Y} =\left\|\begin{bmatrix} 
\dfrac{1}{\eta\kappa}(a(t))^2\Lambda^{-\frac{1}{2}} u -\dfrac{1}{\eta}\Lambda^{-1}  v -\dfrac{1}{\eta\kappa}a(t)\Lambda^{-1} \theta\\ \vspace{-0.3cm} \\
u \\ \vspace{-0.3cm} \\
 -\dfrac{1}{\kappa}a(t)u+\dfrac{1}{\kappa}\Lambda^{-\frac{1}{2}}\theta \end{bmatrix}\right\|_{H^2(\Omega)\times L^{2}(\Omega)\times L^{2}(\Omega)}
\\
&=\left\|\dfrac{1}{\eta\kappa}(a(t))^2\Lambda^{-\frac{1}{2}} u -\dfrac{1}{\eta}\Lambda^{-1}  v -\dfrac{1}{\eta\kappa}a(t)\Lambda^{-1} \theta \right\|_{H^2(\Omega)} + \|u\|_{L^2(\Omega)}+\left\|-\dfrac{1}{\kappa}a(t)u+\dfrac{1}{\kappa}\Lambda^{-\frac{1}{2}}\theta \right\|_{L^2(\Omega)}\\
&\leqslant \left\|\dfrac{1}{\eta\kappa}(a(t))^2\Lambda^{-\frac{1}{2}} u\right\|_{H^2(\Omega)} +\left\|\dfrac{1}{\eta}\Lambda^{-1}  v \right\|_{H^2(\Omega)} + \left\|\dfrac{1}{\eta\kappa}a(t)\Lambda^{-1} \theta \right\|_{H^2(\Omega)}  \\
& \quad +\|u\|_{L^ 2(\Omega)}+ \left\|-\dfrac{1}{\kappa}a(t)u\right\|_{L^2(\Omega)}+ \left\|\dfrac{1}{\kappa}\Lambda^{-\frac{1}{2}}\theta \right\|_{L^2(\Omega)}\\
&\leqslant \dfrac{a_1^2}{\eta\kappa}\| \Lambda^{-\frac{1}{2}}u\|_{H^2(\Omega)}+ \dfrac{1}{\eta}\|\Lambda^{-1}v\|_{H^2(\Omega)}+
\dfrac{a_1}{\eta\kappa} \|\Lambda^{-1}\theta\|_{H^2(\Omega)} + \left(1+\dfrac{a_1}{\kappa}\right)\|u\|_{L^ 2(\Omega)} +\dfrac{1}{\kappa}\|\Lambda^{-\frac{1}{2}}\theta\|_{L^2(\Omega)}\\
&=\left(1+\dfrac{a_1}{\kappa}+\dfrac{a_1^ 2}{\eta \kappa}\right)\|u\|_{L^ 2(\Omega)} +\dfrac{1}{\eta}\|v\|_{H^{-2}(\Omega)} + \left(\dfrac{a_1}{\eta \kappa}+\dfrac{1}{\kappa}\right)\|\theta\|_{H^{-2}(\Omega)}\\
&\leqslant C_1\left\| \begin{bmatrix} u\\ v \\ \theta \end{bmatrix}\right\|_{L^2(\Omega)\times H^{-2}(\Omega)\times H^{-2}(\Omega)}.
\end{split}
\end{equation*}

On the other hand, let $[u\ \ v\ \ \theta]^T\in L^2(\Omega)\times H^{-2}(\Omega)\times H^{-2}(\Omega)$, and note firstly that 
\begin{equation}\label{lmad}
\begin{split}
\left\| \begin{bmatrix} u\\ v \\ \theta \end{bmatrix}\right\|_{L^2(\Omega)\times H^{-2}(\Omega)\times H^{-2}(\Omega)}&=\|u\|_{L^2(\Omega)}+\|v\|_{H^{-2}(\Omega)}+\|\theta\|_{H^{-2}(\Omega)}\\
&=\|u\|_{L^2(\Omega)}+\|\Lambda^{-\frac{1}{2}}v\|_{L^{2}(\Omega)}+\|\Lambda^{-\frac{1}{2}}\theta\|_{L^{2}(\Omega)}.
\end{split}
\end{equation}

The last two parcels of \eqref{lmad} can be estimated as follows 
\begin{equation}\label{lmadk}
\begin{split}
\|\Lambda^{-\frac{1}{2}}\theta\|_{L^{2}(\Omega)}& = \kappa\left\| \frac{1}{\kappa}\Lambda^{-\frac{1}{2}}\theta-\dfrac{1}{\kappa}a(t)u+\dfrac{1}{\kappa}a(t)u\right\|_{L^{2}(\Omega)}\\
&\leqslant \left\|\Lambda^{-\frac{1}{2}}\theta-\dfrac{1}{\kappa}a(t)u\right\|_{L^{2}(\Omega)}+ \|a(t)u\|_{L^{2}(\Omega)}\\
&\leqslant \kappa \left\| \frac{1}{\kappa}\Lambda^{-\frac{1}{2}}\theta-\dfrac{1}{\kappa}a(t)u\right\|_{L^{2}(\Omega)}+ a_1\|u\|_{L^{2}(\Omega)}
\end{split}
\end{equation}
and
\begin{equation}\label{lmadf}
\begin{split}
&\|\Lambda^{-\frac{1}{2}}v\|_{L^{2}(\Omega)} = {\eta} \left\|-\frac{1}{\eta}\Lambda^{-\frac{1}{2}}v \right\|_{L^{2}(\Omega)} \\
&\leqslant \eta \left\|-\frac{1}{\eta} \Lambda^{-\frac{1}{2}} v- \frac{1}{\eta \kappa}a(t)\Lambda^{-\frac{1}{2}}\theta + \frac{1}{\eta \kappa}a^2(t)u \right\|_{L^{2}(\Omega)} + \left\|\frac{1}{\kappa}a(t)\Lambda^{-\frac{1}{2}}\theta \right\|_{L^{2}(\Omega)}+ \left\|\frac{1}{\kappa}a^2(t)u \right\|_{L^{2}(\Omega)}\\
&\leqslant \eta \left\|\Lambda^{\frac{1}{2}}\Big(\frac{1}{\eta \kappa}a^2(t)\Lambda^{-\frac{1}{2}}u- \frac{1}{\eta}\Lambda^{-1}v - \frac{1}{\eta\kappa}a(t) \Lambda^{-1}\theta\Big) \right\|_{L^{2}(\Omega)} + a_1 \left\|\frac{1}{\kappa} \Lambda^{-\frac{1}{2}}\theta \right\|_{L^{2}(\Omega)}+ \frac{1}{\kappa}a_1^2\|u\|_{L^{2}(\Omega)}\\
&\leqslant \eta \left\| \frac{1}{\eta \kappa}a^2(t)\Lambda^{-\frac{1}{2}}u- \frac{1}{\eta}\Lambda^{-1}v - \frac{1}{\eta\kappa}a(t) \Lambda^{-1}\theta\right\|_{H^{2}(\Omega)} + a_1\left\|\frac{1}{\kappa} \Lambda^{-\frac{1}{2}}\theta\right\|_{L^{2}(\Omega)}+ \frac{1}{\kappa}a_1^2\|u\|_{L^{2}(\Omega)}\\
&\leqslant \eta\left\| \frac{1}{\eta \kappa}a^2(t)\Lambda^{-\frac{1}{2}}u- \frac{1}{\eta}\Lambda^{-1}v - \frac{1}{\eta\kappa}a(t) \Lambda^{-1}\theta \right\|_{H^{2}(\Omega)} + a_1\left\| \frac{1}{\kappa}\Lambda^{-\frac{1}{2}}\theta-\dfrac{1}{\kappa}a(t)u\right\|_{L^{2}(\Omega)}\\
&\quad+ \frac{2}{\kappa}a_1^2\|u\|_{L^{2}(\Omega)}.
\end{split}
\end{equation}
Then, combining \eqref{lmad} with \eqref{lmadk} and \eqref{lmadf} we obtain that
\begin{equation*}
\begin{split}
\left\| \begin{bmatrix} u\\ v \\ \theta \end{bmatrix}\right\|_{L^2(\Omega)\times H^{-2}(\Omega)\times H^{-2}(\Omega)}\leqslant C_2\left\| A_{a}^{-1}(t)\begin{bmatrix} u\\ v \\ \theta \end{bmatrix}\right\|_{H^2(\Omega)\times L^{2}(\Omega)\times L^{2}(\Omega)}.
\end{split}
\end{equation*}

So we conclude that the completion of $(\mathcal{H},\|A_{a}^{-1}(t)\cdot\|_{\mathcal{H}})$ and $(\mathcal{H},\|\cdot\|_{L^2(\Omega)\times H^{-2}(\Omega)\times H^{-2}(\Omega)})$ coincide.
\qed

Note that the operator $A_{a}(t)$ can be  extended to its closed $\mathcal{H}-$realization (see Amann \cite{A} p. 262), which we will still denote by the same symbol  so that $A_{a}(t)$ considered in $\mathcal{H}$ is then sectorial positive operator (see [5]). Our next concern will be to obtain embedding of the spaces from the fractional powers scale $\mathcal{H}^\alpha$, $\alpha\geqslant 0$, generated by $(A_{a}(t), \mathcal{H})$.

\begin{theorem}
The operators $A_{(a)}(t)$ are uniformly sectorial and the map $\mathbb{R}\ni t\mapsto A_{(a)}(t)\in \mathcal{L}(Y, \mathcal{H})$ is uniformly H\"{o}lder continuous. Then, for each functional parameter $a$, there exist a process 
\[
\{U_{(a)}(t,\tau): t\geqslant \tau\in \mathbb{R}\}
\] 
(or simply $L_{(a)}(t,\tau)$) associated with the operator $A_{(a)}(t)$, that is given by
\[
L_{(a)}(t,\tau) = e^{-(t-\tau)A_{(a)}(\tau)} + \int_{\tau}^{t}L_{(a)}(t,s)[A_{(a)}(\tau) -
A_{(a)}(s)]e^{-(s-\tau)A_{(a)}(\tau)}ds,\ \forall t\geqslant\tau.
\]
The linear evolution operator $\{L_{(a)}(t,\tau): t\geqslant \tau \in \mathbb{R}\}$ satisfies the condition \eqref{1.65}. 
\end{theorem}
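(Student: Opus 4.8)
The plan is to deduce the statement from the two structural hypotheses $(a)$ and $(b)$ of Section~2: once the family $\{A_{(a)}(t)\}$, in its $\mathcal{H}$-realization, is shown to be uniformly sectorial and uniformly Hölder continuous, the existence of the process $L_{(a)}(t,\tau)$, its variation-of-constants representation, and the smoothing estimate \eqref{1.65} on the scale $\{\mathcal{H}^\alpha\}$ are immediate consequences of the abstract construction recalled there (cf.\ \cite{CN,sobol}). The observation that organizes the whole argument is that the dependence on $t$ in \eqref{Def4} enters only through the scalar $a(t)$, which by \eqref{eta-c1} ranges over the compact interval $[a_0,a_1]$; hence any bound made uniform in the parameter $a\in[a_0,a_1]$ is automatically uniform in $t\in\mathbb{R}$.

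First I would establish uniform sectoriality, which I expect to be the main obstacle. Since every entry of $A_{(a)}(t)$ is a function of the single positive self-adjoint operator $\Lambda$, the resolvent commutes with $\Lambda$ and diagonalizes along its spectral resolution. Writing $\{\mu_n\}$ for the eigenvalues of $\Lambda$, the resolvent problem $(\lambda I-A_{(a)}(t))w=g$ decouples into the $3\times3$ systems governed by
\[
M_n(a)=\begin{bmatrix} 0 & 1 & 0\\ -\eta\mu_n & 0 & -a\mu_n^{1/2}\\ 0 & a\mu_n^{1/2} & \kappa\mu_n^{1/2}\end{bmatrix}.
\]
The key step is that the change of variables realizing the $\mathcal{H}=L^2\times H^{-2}\times H^{-2}$ norm on the $n$-th eigenspace, namely the diagonal weight $W_n=\mathrm{diag}(1,\mu_n^{-1/2},\mu_n^{-1/2})$, is an isometry onto standard $\mathbb{C}^3$ and collapses all the $n$-dependence into a scalar: $W_nM_n(a)W_n^{-1}=\mu_n^{1/2}N(a)$, where $N(a)$ is the fixed matrix obtained from $M_n(a)$ by setting $\mu_n=1$. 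Consequently $\|(\lambda I-A_{(a)}(t))^{-1}\|_{L(\mathcal{H})}=\sup_n\|(\lambda-\mu_n^{1/2}N(a))^{-1}\|_{\mathbb{C}^3}$, and the scaling identity $(\lambda-\mu^{1/2}N(a))^{-1}=\mu^{-1/2}(\mu^{-1/2}\lambda-N(a))^{-1}$ reduces everything to a single resolvent bound for the compact family $\{N(a):a\in[a_0,a_1]\}$. The delicate point is precisely here: a Routh–Hurwitz analysis must confirm that the spectrum of $N(a)$ lies in the open left half-plane within a fixed sector, uniformly in $a\in[a_0,a_1]$, and one must check that the sectorial constants do not degenerate as $a$ approaches the endpoints or as $\mu_n\to\infty$; combined with $0\in\rho(A_{(a)}(t))$ (already verified in the excerpt), this yields the estimate of $(a)$ with the denominator $|\lambda|+1$ and a constant independent of $t$.

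Next I would verify uniform Hölder continuity, which the structure makes essentially immediate. Subtracting \eqref{Def4} at times $t$ and $\tau$, every entry independent of $a$ cancels, leaving
\[
A_{(a)}(t)-A_{(a)}(\tau)=\bigl(a(t)-a(\tau)\bigr)B,\qquad
B=\begin{bmatrix} 0 & 0 & 0\\ 0 & 0 & -\Lambda^{1/2}\\ 0 & \Lambda^{1/2} & 0\end{bmatrix}.
\]
Since $\Lambda^{1/2}$ has order $2$ it maps $L^2$ boundedly into $H^{-2}$, so $B\in\mathcal{L}(Y,\mathcal{H})$ with $Y=H^2\times L^2\times L^2$; hence, using \eqref{eta-c2},
\[
\|A_{(a)}(t)-A_{(a)}(\tau)\|_{\mathcal{L}(Y,\mathcal{H})}=|a(t)-a(\tau)|\,\|B\|_{\mathcal{L}(Y,\mathcal{H})}\leqslant C\,\|B\|_{\mathcal{L}(Y,\mathcal{H})}\,|t-\tau|^{\beta}.
\]
Equivalently, composing with the uniformly bounded map $A_{(a)}^{-1}(s)\colon\mathcal{H}\to Y$ supplied by the preceding Proposition gives $\|[A_{(a)}(t)-A_{(a)}(\tau)]A_{(a)}^{-1}(s)\|_{L(\mathcal{H})}\leqslant C|t-\tau|^{\beta}$, which is hypothesis $(b)$ with $\epsilon_0=\beta$, uniformly in $s$.

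Finally, with $(a)$ and $(b)$ in hand the family $A_{(a)}(t)$ is uniformly sectorial and the map $t\mapsto A_{(a)}(t)\in\mathcal{L}(Y,\mathcal{H})$ is uniformly Hölder continuous. The abstract theory of Section~2 then produces the linear evolution process $\{L_{(a)}(t,\tau):t\geqslant\tau\}$ with exactly the Duhamel representation stated, and the smoothing estimate \eqref{1.65} follows from the abstract estimate on the fractional power scale; this last step is routine once the two hypotheses are verified.
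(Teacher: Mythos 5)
Your proposal is correct, and while your Hölder-continuity step and the final passage to the evolution process coincide with the paper's, your treatment of uniform sectoriality takes a genuinely different route. The paper disposes of sectoriality by citation: it invokes Lasiecka--Triggiani \cite{LT} for the uniform resolvent estimate in $Y$ (with the $\mathcal{H}$-realization handled via Amann \cite{A}), and then spends its effort on the explicit computation $A_{(a)}(t)-A_{(a)}(s)=(a(t)-a(s))B$ with $B\in\mathcal{L}(Y,\mathcal{H})$, exactly as you do. You instead prove sectoriality from scratch by block-diagonalizing along the spectral resolution of $\Lambda$ and conjugating with $W_n=\mathrm{diag}(1,\mu_n^{-1/2},\mu_n^{-1/2})$; the scaling identity $W_nM_n(a)W_n^{-1}=\mu_n^{1/2}N(a)$ does check out, and it reduces everything to a resolvent bound for the compact matrix family $\{N(a):a\in[a_0,a_1]\}$, with the factor $\mu_n^{-1/2}$ from scaling together with $\mu_n\geqslant\mu_1>0$ producing the denominator $|\lambda|+1$. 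This buys two things the paper's citation does not: a self-contained argument that works directly in $\mathcal{H}$, the same space in which hypothesis $(b)$ is formulated (and here you are in fact more careful than the paper, which only records the $\mathcal{L}(Y,\mathcal{H})$ bound, whereas you compose with the uniformly bounded $A_{(a)}^{-1}(s):\mathcal{H}\to Y$ from the preceding Proposition to land exactly on $(b)$ with $\epsilon_0=\beta$); and transparency about where the structural hypotheses enter. The one incomplete point is the Routh--Hurwitz step, which you flag as delicate but do not execute: in the stable sign convention the characteristic polynomial of $N(a)$ is $\lambda^3+\kappa\lambda^2+(\eta+a^2)\lambda+\eta\kappa$, and the Hurwitz condition $\kappa(\eta+a^2)>\eta\kappa$ reduces to $\kappa a^2\geqslant\kappa a_0^2>0$, which holds uniformly precisely because of the lower bound $a(t)\geqslant a_0>0$ in \eqref{eta-c1}; if $a$ were allowed to vanish, the spectrum would touch the imaginary axis at $\pm i\sqrt{\eta}$ and uniform sectoriality would fail, so this is exactly the point where thermal coupling is essential. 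Carrying out that two-line computation, and noting that continuity of the three eigenvalues on the compact interval $[a_0,a_1]$ keeps them in a compact subset of the open half-plane (hence in a fixed sector with uniform constants), would make your sketch a complete and arguably more informative proof than the paper's.
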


\begin{proof}
From \cite{LT} it follows that $A_{(a)}(t)$ is uniformly sectorial (in $Y$); that is there is a constant $M>0$ (independent of $t$) such that
\[
\|(\lambda I+A_{(a)}(t))^{-1} \|_{L(Y)}\leqslant\dfrac{M}{|\lambda|+1},\ \mbox{for all}\ \lambda\in\mathbb{C}\ \mbox{with}\ \mbox{Re}\lambda\geqslant0.
\]

Now, note that for  $[u\ \ v \ \ \theta]^T\in H^2(\Omega)\times L^2(\Omega)\times L^2(\Omega)$, and $t,s\in\mathbb{R}$, we can estimate the norm $\|[(A_{(a)}(t)-A_{(a)}(s))[u\ \ v \ \ \theta]^T\|_{ L^2(\Omega)\times H^{-2}(\Omega)\times H^{-2}(\Omega)}$ using \eqref{eta-c2} in the following way
\[
\left\|(A_{(a)}(t)-A_{(a)}(s)) \begin{bmatrix} u\\ v \\ \theta \end{bmatrix}\right\|_{L^2(\Omega)\times H^{-2}(\Omega)\times H^{-2}(\Omega)}
\]
\begin{equation*}
\begin{split}
&=\left\|\begin{bmatrix}0 & 0 & 0 \\ 0 & 0 & -(a(t)-a(s))\Lambda^{\frac12}\\ 0 & (a(t)-a(s))\Lambda^{\frac12} & 0 \end{bmatrix}\begin{bmatrix} u\\ v \\ \theta \end{bmatrix}\right\|_{L^2(\Omega)\times H^{-2}(\Omega)\times H^{-2}(\Omega)}\\
&=\left\|\begin{bmatrix}0 \\ -(a(t)-a(s))\Lambda^{\frac12}\theta \\ (a(t)-a(s))\Lambda^{\frac12}v\end{bmatrix}\right\|_{L^2(\Omega)\times H^{-2}(\Omega)\times H^{-2}(\Omega)}\\
&= |a(t)-a(s)|\|(-\Delta)\theta\|_{H^{-2}(\Omega)}+|a(t)-a(s)|\|(-\Delta) v\|_{H^{-2}(\Omega)}\\
&=|a(t)-a(s)|\|\theta\|_{L^2(\Omega)}+|a(t)-a(s)|\|v\|_{L^2(\Omega)}\\
&=|a(t)-a(s)|(\|\theta\|_{L^2(\Omega)}+\|v\|_{L^2(\Omega)})\\
&\leqslant c|t-s|^{\beta}\left\|\begin{bmatrix} u\\ v \\ \theta \end{bmatrix}\right\|_{H^2(\Omega)\times L^2(\Omega)\times L^2(\Omega)},
\end{split}
\end{equation*}
for any $t,\tau,s\in\mathbb{R}$, hence the map $\mathbb{R}\ni t\mapsto A_{(a)}(t)\in \mathcal{L}(Y)$ is uniformly H\"older continuous and 
\begin{equation*}
\begin{split}
\|A_{(a)}(t)-A_{(a)}(s)\|_{\mathcal{L}(H^2(\Omega)\times L^2(\Omega)\times L^2(\Omega), L^2(\Omega)\times H^{-2}(\Omega)\times H^{-2}(\Omega))}\leqslant c(t-s)^{\beta}.
\end{split}
\end{equation*}
\qed

\medskip

Therefore, there exists a linear evolution process $\{L_{(a)}(t,\tau): t\geqslant \tau\in \mathbb{R}\}$ associated with the operator $A_{(a)}(t)$, that is given by
\[
L_{(a)}(t,\tau) = e^{-(t-\tau)A_{(a)}(\tau)} + \int_{\tau}^{t}L_{(a)}(t,s)[A_{(a)}(\tau) -
A_{(a)}(s)]e^{-(s-\tau)A_{(a)}(\tau)}ds,\ \forall t\geqslant\tau.
\]
Futhermore, the process $\{L_{(a)}(t,\tau): t\geqslant \tau \in \mathbb{R}\}$ satisfies the condition \eqref{1.65}.
\end{proof}

\section{Existence of global solutions}\label{Sec-Exist} 

In this section we study the existence of global solutions for \eqref{Def-Prob}. It is not difficult to prove the following result, see for instance Lemma 2.4 in \cite{Karina}. 

\begin{lemma}\label{lem:fregul}
Let $f \in C^1(\mathbb{R}^2)$ be a function such that the condition \eqref{Cond_f2} is holds. Then
$$
| f(t,s_1)-f(t,s_2) |  \leqslant  2^{\rho-1}c  | s_1-s_2 |
\big(1 +| s_1 |^{\rho-1} + | s_2 |^{\rho-1}  \big),\ 
\forall\ t,s_1,s_2 \in \mathbb{R}.
$$
\end{lemma}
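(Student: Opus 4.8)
The plan is to write the difference $f(t,s_1)-f(t,s_2)$ as an integral of $f_s$ along the segment joining $s_2$ to $s_1$, and then to control the integrand by the growth bound \eqref{Cond_f2}. Concretely, since $f\in C^1(\mathbb{R}^2)$, for each fixed $t$ the fundamental theorem of calculus gives
$$f(t,s_1)-f(t,s_2)=\int_0^1 f_s\big(t,s_2+r(s_1-s_2)\big)\,(s_1-s_2)\,dr,$$
so that, taking absolute values and inserting \eqref{Cond_f2} with its constant $c$,
$$|f(t,s_1)-f(t,s_2)|\leqslant c\,|s_1-s_2|\int_0^1\big(1+|s_2+r(s_1-s_2)|^{\rho-1}\big)\,dr.$$

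Next I would estimate the point $s_2+r(s_1-s_2)=(1-r)s_2+r\,s_1$ on the segment by the triangle inequality: for every $r\in[0,1]$ one has $|(1-r)s_2+r\,s_1|\leqslant|s_1|+|s_2|$. Since $\rho\geqslant 1$, so that $\rho-1\geqslant 0$, the map $x\mapsto x^{\rho-1}$ is nondecreasing on $[0,\infty)$ and the integral in $r$ collapses, yielding
$$|f(t,s_1)-f(t,s_2)|\leqslant c\,|s_1-s_2|\big(1+(|s_1|+|s_2|)^{\rho-1}\big).$$

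The only remaining point, and the one place where the constant $2^{\rho-1}$ enters, is an elementary convexity-type estimate valid for arbitrary (not necessarily integer) exponents. Using $a+b\leqslant 2\max\{a,b\}$ for $a,b\geqslant 0$ together with $\max\{a,b\}^{\,p}=\max\{a^{p},b^{p}\}\leqslant a^{p}+b^{p}$ for the exponent $p=\rho-1\geqslant 0$, I obtain
$$(|s_1|+|s_2|)^{\rho-1}\leqslant 2^{\rho-1}\big(|s_1|^{\rho-1}+|s_2|^{\rho-1}\big).$$
Finally, since $\rho\geqslant 1$ forces $2^{\rho-1}\geqslant 1$, the stray $1$ can be absorbed into the factor $2^{\rho-1}$, which gives exactly
$$|f(t,s_1)-f(t,s_2)|\leqslant 2^{\rho-1}c\,|s_1-s_2|\big(1+|s_1|^{\rho-1}+|s_2|^{\rho-1}\big),$$
as claimed.

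I expect no serious obstacle: the argument is entirely one-dimensional and uses only $f\in C^1$ and the growth bound \eqref{Cond_f2}. The sole subtlety worth stating carefully is that the estimate must hold for all real $\rho\geqslant 1$, so the inequality $(a+b)^{\rho-1}\leqslant 2^{\rho-1}(a^{\rho-1}+b^{\rho-1})$ should be justified by the $\max$ argument above rather than by a binomial expansion; this is precisely what pins down the constant $2^{\rho-1}$ appearing in the statement.
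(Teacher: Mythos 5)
Your proof is correct and follows the expected route: the paper itself omits the proof of this lemma, simply citing Lemma 2.4 of \cite{Karina}, and your argument --- the fundamental theorem of calculus along the segment joining $s_2$ to $s_1$, the growth bound \eqref{Cond_f2} on $f_s$, and the convexity-type estimate $(a+b)^{\rho-1}\leqslant 2^{\rho-1}\big(a^{\rho-1}+b^{\rho-1}\big)$ --- is precisely the standard argument behind that citation. Your justification of the constant $2^{\rho-1}$ via the $\max$ inequality, and the absorption of the stray $1$ using $2^{\rho-1}\geqslant 1$, are both handled correctly, so nothing is missing.
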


The next result also can be found in \cite{Karina}, see Lemma 2.5, we present the proof for the sake of completeness. 

\begin{lemma}\label{lem:fewelldef}
Assume that $1<\rho < \frac{N+4}{N-4}$ and let $f \in C^1(\mathbb{R}^2)$ be
a function such that the condition \eqref{Cond_f2} holds. Then
there exists $\alpha \in (0,1)$ such that the
Nemitski\u{\i} operator $f^e(t,\cdot): X^{\frac12}\to
X^{-\frac{\alpha}{2}}$ is Lipschitz continuous in bounded subsets
of $ X^{\frac12}$, uniformly in $t\in\mathbb{R}$.
\end{lemma}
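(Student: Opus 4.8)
The plan is to dualize and reduce the claim to a chain of Sobolev embeddings, the essential content being a careful bookkeeping of H\"older exponents. Since $X^{-\frac{\alpha}{2}}=(X^{\frac{\alpha}{2}})'$, for $u_1,u_2\in X^{\frac12}$ I would write
\[
\|f^e(t,u_1)-f^e(t,u_2)\|_{X^{-\frac{\alpha}{2}}}=\sup_{\|\phi\|_{X^{\frac{\alpha}{2}}}\leqslant1}\left|\int_\Omega \big(f(t,u_1)-f(t,u_2)\big)\phi\,dx\right|.
\]
Recalling that $\Lambda=(-\Delta)^2$, so that $-\Lambda^{\frac12}$ is the Dirichlet Laplacian, the relevant spaces are identified as $X^{\frac12}=D(-\Delta)\hookrightarrow H^2(\Omega)$ and $X^{\frac{\alpha}{2}}=D((-\Delta)^{\alpha})\hookrightarrow H^{2\alpha}(\Omega)$, and one has the Sobolev embedding $H^2(\Omega)\hookrightarrow L^{2^{*}}(\Omega)$ with $2^{*}=\frac{2N}{N-4}$ for $N\geqslant5$.

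Next I would use the growth estimate of Lemma \ref{lem:fregul} pointwise together with a three-factor H\"older inequality. Writing $q=2^{*}$ and choosing $p$ by $\frac1p=\frac{\rho}{q}$, and letting $p'$ be its conjugate exponent, the splitting $\frac1q+\frac{\rho-1}{q}+\frac1{p'}=1$ gives
\[
\left|\int_\Omega\big(f(t,u_1)-f(t,u_2)\big)\phi\,dx\right|\leqslant 2^{\rho-1}c\,\|u_1-u_2\|_{L^q}\big(1+\|u_1\|_{L^q}^{\rho-1}+\|u_2\|_{L^q}^{\rho-1}\big)\|\phi\|_{L^{p'}}.
\]
The factors involving $u_1,u_2$ are then controlled by the embedding $X^{\frac12}\hookrightarrow L^{q}$, producing a bound in terms of $\|u_1-u_2\|_{X^{\frac12}}$ and the radius of the bounded set; since the constant $c$ in \eqref{Cond_f2} is independent of $t$, the resulting Lipschitz bound is automatically uniform in $t\in\mathbb{R}$.

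It remains to absorb the last factor by requiring $X^{\frac{\alpha}{2}}\hookrightarrow L^{p'}(\Omega)$, and this is where the subcriticality hypothesis enters and where the only genuine work lies. Through $X^{\frac{\alpha}{2}}\hookrightarrow H^{2\alpha}(\Omega)\hookrightarrow L^{p'}(\Omega)$ the requirement reduces to $\frac1{p'}\geqslant\frac12-\frac{2\alpha}{N}$, and substituting $\frac1{p'}=1-\frac{\rho(N-4)}{2N}$ this becomes $\alpha\geqslant\frac{\rho(N-4)-N}{4}$. Such an $\alpha$ can be chosen in $(0,1)$ precisely when $\frac{\rho(N-4)-N}{4}<1$, that is, when $\rho<\frac{N+4}{N-4}$, which is exactly the stated bound; if $\rho(N-4)\leqslant N$ any small $\alpha>0$ suffices. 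The main obstacle is therefore this exponent bookkeeping: confirming that $p,p'\in(1,\infty)$ throughout the admissible range (which holds since $\frac{N+4}{N-4}<\frac{2N}{N-4}$) and that the threshold for $\alpha$ falls strictly below $1$ exactly under $\rho<\frac{N+4}{N-4}$, so that the choice $q=2^{*}$ is the one that makes the requirement on $\alpha$ weakest. For $N=2,3,4$ the embeddings of $H^2(\Omega)$ are more favourable and the same scheme applies for every $\rho\geqslant1$.
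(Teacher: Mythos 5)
Your proof is correct, and while it uses the same ingredients as the paper's proof (the pointwise bound of Lemma \ref{lem:fregul}, H\"older's inequality, and the scale $X^{\gamma}\hookrightarrow H^{4\gamma}(\Omega)$), it is organized through a genuinely different decomposition. The paper stays on the primal side: it bounds $\|f^e(t,u)-f^e(t,v)\|_{X^{-\frac{\alpha}{2}}}$ by the $L^{\frac{2N}{N+4\alpha}}(\Omega)$-norm (the displayed exponent $\frac{2N}{N-4\alpha}$ there is a typo; the outer exponent $\frac{N+4\alpha}{2N}$ and the stated embedding constant confirm $L^{\frac{2N}{N+4\alpha}}(\Omega)\hookrightarrow X^{-\frac{\alpha}{2}}$, dual to $X^{\frac{\alpha}{2}}\hookrightarrow L^{\frac{2N}{N-4\alpha}}(\Omega)$) and then applies a two-factor H\"older inequality, with $u-v$ in $L^{\frac{2N}{N-4\alpha}}$ and the bracket in $L^{\frac{N}{4\alpha}}$. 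You instead dualize explicitly and use a three-factor H\"older inequality with both $u$-factors at the critical exponent $\frac{2N}{N-4}$, absorbing the slack into $\phi\in L^{p'}$. The exponent bookkeeping then differs in a substantive way: your split yields exactly the threshold $\alpha\geqslant\frac{\rho(N-4)-N}{4}$, which is precisely condition \eqref{eq:alpha}, whereas the paper's split requires $X^{\frac12}\hookrightarrow L^{\frac{N(\rho-1)}{4\alpha}}(\Omega)$, i.e.\ the slightly stronger $\alpha\geqslant\frac{(\rho-1)(N-4)}{8}$; this is still attainable in $(0,1)$ whenever $\rho<\frac{N+4}{N-4}$, so the paper's conclusion stands, but its assertion that the final embedding in its chain holds for all $\rho\leqslant\frac{N+4\alpha}{N-4}$ is not accurate as written, and your route is marginally sharper on this point. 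What the paper's primal formulation buys in exchange is the intermediate Lebesgue-norm estimate that the subsequent Remark reuses directly (via $L^{\frac{2N}{N+4}}(\Omega)\hookrightarrow L^2(\Omega)$) to conclude that $f^e(t,\cdot):X^{\frac12}\to L^2(\Omega)$ is Lipschitz on bounded sets, a statement your duality argument would require a short separate primal estimate to recover.
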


\proof Let  be $\alpha \in (0,1)$ such that
\begin{equation}\label{eq:alpha}
\rho \leqslant  \frac{N+4 \alpha}{N-4}.
\end{equation}
 Since $X^{\gamma} \hookrightarrow H^{{4 \gamma}}(\Omega)$,
we have $X^{\frac12} \hookrightarrow X^{\frac{\alpha}{2}}
\hookrightarrow H^{{2 \alpha}}(\Omega) \hookrightarrow
L^{\frac{2N}{(N-4\alpha)}}(\Omega)$.
Therefore $L^{\frac{2N}{(N-4\alpha)}}(\Omega) \hookrightarrow
X^{-\frac{\alpha}{2}}$. Now by Lemma \ref{lem:fregul} and H\"older's
Inequality we obtain
\begin{align*}
&\|f^e(t,u) - f^e(t,v)\|_{X^{-\frac{\alpha}{2}}}\\
& \leqslant \tilde{c}\,  \|f^e(t,u) - f^e(t,v)
\|_{L^{\frac{2N}{(N-4\alpha)}}(\Omega)} \\
&  \leqslant \tilde{c}\, \Big( \int_{\Omega}
[ 2^{\rho-1}c \, |u-v|(1 + |u|^{\rho-1} +  |v|^{\rho-1})
]^{\frac{2N}{(N-4\alpha)}} \Big)^{\frac{N+4 \alpha}{2N}} \\
&  \leqslant \tilde{\tilde{c}}\,  \|u-v\|_{L^{\frac{2N}{N-4 \alpha}}
 (\Omega)}  \Big( \int_{\Omega} \big( 1 +  |u|^{\rho-1} +  |v|^{\rho-1}
 \big)^{\frac{N}{4 \alpha}} \Big)^{\frac{4 \alpha}{N}} \\
&  \leqslant \tilde{\tilde{\tilde{c}}}\,
\|u-v\|_{L^{\frac{2N}{N-4 \alpha}} (\Omega)}
\Big(1 + \|u\|_{L^{\frac{N(\rho-1)}{4 \alpha}}(\Omega)}^{\rho-1}
 + \|v\|_{L^{\frac{N(\rho-1)}{4 \alpha}}(\Omega)}^{\rho-1} \Big),
\end{align*}
where ${\tilde{c}}$ is the embedding constant from
$L^{\frac{2N}{N+4 \alpha}}(\Omega)$ to $X^{-\frac{\alpha}{2}}$.

From Sobolev embeddings $X^{\frac12} \hookrightarrow
X^{\frac{\alpha}{2}} \hookrightarrow H^{2 \alpha}(\Omega)
\hookrightarrow L^{\frac{N(\rho-1)}{4 \alpha}}(\Omega)$  for all
$1< \rho \leqslant \frac{N+4 \alpha}{N-4}$, it follows that for any $t\in\mathbb{R}$
\[
\|f^e(t,u) - f^e(t,v)\|_{X^{-\frac{\alpha}{2}}} \leqslant C_1
\|u-v\|_{X^{\frac12}} \big(1 +  \|u\|_{X^{\frac12}}^{\rho-1} +
\|v\|_{X^{\frac12}}^{\rho-1} \big),
\]
for some constant $C_1>0$.
\qed

\begin{remark}
Since $L^{\frac{2N}{(N+4)}}(\Omega) \hookrightarrow L^2(\Omega)$, it
follows from the proof of the Lemma \ref{lem:fewelldef} that
$f^e(t,\cdot): X^{\frac12} \to L^2(\Omega)$ is Lipschitz continuous in bounded
subsets, uniformly in $t\in\mathbb{R}$; that is,
\[
\|f^e(t,u) - f^e(t,v)\|_{L^2(\Omega)}  \leqslant  \tilde{c}\,
\|f^e(t,u) - f^e(t,v)\|_{L^{\frac{2N}{(N+4)}}(\Omega)} \leqslant
\tilde{\tilde{c}} \|u-v\|_{X^{\frac12}}.
\]
\end{remark}

\begin{remark}
If $\mathcal{H} =: Y^0$, then the fractional power spaces $Y^\alpha$, $\alpha\in [0,1]$, are given by
\[
Y^\alpha = [Y^1,Y^0]_{\alpha} = X^{\frac{1-\alpha}{2}} \times X^{-\frac{\alpha}{2}} \times X^{-\frac{\alpha}{2}},
\]
where $[\cdot,\cdot]_{\alpha}$ denotes the complex interpolation functor (see \cite{triebel}).
\end{remark}

\begin{corollary}\label{corol:lips}
If $f$ is as in Lemma \ref{lem:fewelldef}  and
$\alpha \in (0,1)$ satisfies \eqref{eq:alpha}, the function
$F: \mathbb{R}\times Y \to Y^{\alpha}$, given by $F(t,\begin{bsmallmatrix} u\\ v\\ \theta\end{bsmallmatrix})=\begin{bsmallmatrix} 0\\ f^e(t,u)\\ 0\end{bsmallmatrix}$ is Lipschitz continuous in bounded
subsets of $ Y$, uniformly in $t\in\mathbb{R}$.
\end{corollary}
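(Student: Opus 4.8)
The plan is to reduce the claim to the Nemitski\u{\i} estimate already obtained in Lemma \ref{lem:fewelldef}, using the explicit description of the fractional power scale given in the preceding Remark. First I would invoke the identification $Y^\alpha = X^{\frac{1-\alpha}{2}} \times X^{-\frac{\alpha}{2}} \times X^{-\frac{\alpha}{2}}$. Since $F(t,[u\ v\ \theta]^T) = [0\ f^e(t,u)\ 0]^T$ has a single nonzero component, for any $w_1 = [u_1\ v_1\ \theta_1]^T$ and $w_2 = [u_2\ v_2\ \theta_2]^T$ in $Y$ the outer two slots cancel and
\[
\|F(t,w_1) - F(t,w_2)\|_{Y^\alpha} = \|f^e(t,u_1) - f^e(t,u_2)\|_{X^{-\frac{\alpha}{2}}}.
\]
Thus the whole problem collapses to controlling the middle component in the $X^{-\frac{\alpha}{2}}$ norm by the $Y$-distance of the arguments, noting in particular that $\|u_1 - u_2\|_{H^2(\Omega)} \leqslant \|w_1 - w_2\|_Y$.

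Next I would establish the scalar Lipschitz estimate with the first factor $H^2(\Omega)$ of $Y$ in place of $X^{\frac12}$. This is the only genuinely new point: Lemma \ref{lem:fewelldef} is stated for arguments in $X^{\frac12} = D(\Lambda^{\frac12}) = H^2(\Omega)\cap H^1_0(\Omega)$, which is strictly smaller than the first factor $H^2(\Omega)$ of $Y$ because of the Dirichlet condition, so one cannot simply substitute an arbitrary $u \in H^2(\Omega)$. However, inspecting the proof of Lemma \ref{lem:fewelldef} one sees that $X^{\frac12}$ enters only through the Sobolev embeddings $X^{\frac12} \hookrightarrow H^{2\alpha}(\Omega) \hookrightarrow L^{\frac{2N}{N-4\alpha}}(\Omega)$ and $X^{\frac12} \hookrightarrow H^{2\alpha}(\Omega) \hookrightarrow L^{\frac{N(\rho-1)}{4\alpha}}(\Omega)$, neither of which uses boundary behaviour. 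Since $\alpha \in (0,1)$ gives $2\alpha < 2$, we have the embedding $H^2(\Omega) \hookrightarrow H^{2\alpha}(\Omega)$, so the very same chains hold starting from $H^2(\Omega)$. Repeating the H\"older-inequality computation of Lemma \ref{lem:fewelldef} verbatim, with $\|\cdot\|_{H^2(\Omega)}$ in the role of $\|\cdot\|_{X^{\frac12}}$, would yield a constant $C > 0$ independent of $t$ with
\[
\|f^e(t,u_1) - f^e(t,u_2)\|_{X^{-\frac{\alpha}{2}}} \leqslant C\,\|u_1 - u_2\|_{H^2(\Omega)}\big(1 + \|u_1\|_{H^2(\Omega)}^{\rho-1} + \|u_2\|_{H^2(\Omega)}^{\rho-1}\big).
\]

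Finally I would localize to bounded sets. Fixing $R > 0$ and taking $w_1, w_2$ with $\|w_1\|_Y, \|w_2\|_Y \leqslant R$, the inequalities $\|u_i\|_{H^2(\Omega)} \leqslant \|w_i\|_Y \leqslant R$ and $\|u_1 - u_2\|_{H^2(\Omega)} \leqslant \|w_1 - w_2\|_Y$ combine with the displayed estimate and the reduction of the first paragraph to give
\[
\|F(t,w_1) - F(t,w_2)\|_{Y^\alpha} \leqslant C\big(1 + 2R^{\rho-1}\big)\,\|w_1 - w_2\|_Y,
\]
where $C(1 + 2R^{\rho-1})$ is independent of $t$. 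This is precisely Lipschitz continuity of $F$ on bounded subsets of $Y$, uniformly in $t \in \mathbb{R}$. The only delicate step is the transfer of Lemma \ref{lem:fewelldef} from $X^{\frac12}$ to $H^2(\Omega)$; I expect this to be routine once it is observed that the proof relies solely on $H^{2\alpha}$-regularity and that $H^2(\Omega) \hookrightarrow H^{2\alpha}(\Omega)$ for $\alpha \in (0,1)$.
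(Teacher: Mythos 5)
Your proposal is correct and takes essentially the route the paper intends: the corollary is stated without proof precisely because, after the identification $Y^\alpha = X^{\frac{1-\alpha}{2}}\times X^{-\frac{\alpha}{2}}\times X^{-\frac{\alpha}{2}}$ of the preceding Remark, the claim reduces componentwise to the estimate of Lemma \ref{lem:fewelldef} applied to the middle slot, exactly as you do. Your ``delicate step'' about $H^2(\Omega)$ versus $X^{\frac12}=H^2(\Omega)\cap H^1_0(\Omega)$ is a fair reading of the paper's loose notation (the intended first factor of $Y$ is really $X^{\frac12}$, as the $\alpha\to0$ limit of the Remark's interpolation formula shows, so the paper can substitute $u\in X^{\frac12}$ directly), and your observation that the chain $H^2(\Omega)\hookrightarrow H^{2\alpha}(\Omega)\hookrightarrow L^{\frac{2N}{N-4\alpha}}(\Omega)$ uses no boundary behaviour correctly disposes of the issue under either reading.
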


Now, Theorem \ref{teo:existunicsol} guarantees local
well posedness for the problem \eqref{Def-Prob} in the energy
space $Y$.

\begin{corollary}
If $f, F$  are like in the Corollary \ref{corol:lips} and
$\alpha \in (0,1)$ satisfies \eqref{eq:alpha}, then given $r > 0$, there is a time
$\tau=\tau(r) > 0$, such that for all
$w_0 \in B_{Y}(0;r)$ there exists a unique solution
$w: [t_0,t_0+{\tau}] \to Y $ of the
problem \eqref{Def-Prob} starting in $w_0$.
Moreover, such solutions are continuous with respect the
initial data in $B_{Y}(0;r)$.
\end{corollary}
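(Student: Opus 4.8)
The plan is to obtain this statement as a direct application of the abstract local well-posedness result, Theorem~\ref{teo:existunicsol}, to the matrix formulation \eqref{Def-Prob}. First I would recast \eqref{Def-Prob} in the form \eqref{po} by setting $\mathcal{A}(t):=-A_{(a)}(t)$ and $g:=F$, so that $w_t=A_{(a)}(t)w+F(t,w)$, $w(\tau)=w_0$, becomes $\frac{dw}{dt}=-\mathcal{A}(t)w+g(t,w)$. It then remains only to check that the three hypotheses of Theorem~\ref{teo:existunicsol} hold in the present functional setting: uniform sectoriality of the linear family, uniform H\"older continuity of the linear family, and Lipschitz continuity of the nonlinearity on bounded sets with an admissible index gap.

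For the two linear hypotheses I would invoke the preceding theorem of Section~3, where it is shown that the operators $A_{(a)}(t)$ are uniformly sectorial (in $Y$) and that the map $\mathbb{R}\ni t\mapsto A_{(a)}(t)\in\mathcal{L}(Y,\mathcal{H})$ is uniformly H\"older continuous, with constants independent of $t$. Consequently $\mathcal{A}(t)=-A_{(a)}(t)$ satisfies assumptions $(a)$ and $(b)$, and the associated linear evolution process $L_{(a)}(t,\tau)$ obeys the smoothing estimate \eqref{1.65}. For the nonlinear hypothesis I would invoke Corollary~\ref{corol:lips}: for $\alpha\in(0,1)$ satisfying \eqref{eq:alpha}, the map $F:\mathbb{R}\times Y\to Y^{\alpha}$ is Lipschitz continuous on bounded subsets of $Y$, uniformly in $t$.

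The one point that must be verified carefully is the compatibility of the fractional-power indices with the requirement $\alpha\in[\beta,\beta+1)$ of Theorem~\ref{teo:existunicsol}. Here the solution is sought in the energy space $Y$, while $F$ takes values in $Y^{\alpha}$; by the interpolation identity of the preceding Remark the space $Y^{\alpha}$ lies strictly between $Y$ and the extrapolation space $\mathcal{H}$ in the fractional scale, so the gap between the solution space and the target of $F$ is a fractional amount strictly less than one. This is precisely the admissible range, and it guarantees that the singularity arising in the variation-of-constants representation $S(t,\tau)=L_{(a)}(t,\tau)+\int_\tau^t L_{(a)}(t,s)F(s,L(s,\tau))\,ds$ is integrable, which is what makes the contraction-mapping argument underlying Theorem~\ref{teo:existunicsol} run.

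With the three ingredients in place I would simply apply Theorem~\ref{teo:existunicsol} to conclude: given $r>0$ there is $\tau(r)>0$ such that for every $w_0\in B_Y(0;r)$ the problem \eqref{Def-Prob} has a unique solution $w:[t_0,t_0+\tau]\to Y$ starting at $w_0$, depending continuously on $w_0$ in $B_Y(0;r)$. I do not expect a genuine obstacle here, since the substantive analytic work has already been carried out in Section~3 (the linear estimates) and in Corollary~\ref{corol:lips} (the nonlinear estimate); the only care needed is the index bookkeeping just described, together with the observation that every H\"older and Lipschitz constant is uniform in the time variable, which is indispensable for the non-autonomous evolution-process framework.
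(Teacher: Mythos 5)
Your proposal is correct and is exactly the paper's route: the corollary is obtained there as an immediate application of Theorem~\ref{teo:existunicsol}, with uniform sectoriality and uniform H\"older continuity of $A_{(a)}(t)$ supplied by the theorem of Section~3 and the uniform-in-$t$ Lipschitz property of $F:\mathbb{R}\times Y\to Y^{\alpha}$ supplied by Corollary~\ref{corol:lips}. Your additional check that the index gap $\alpha<1$ keeps the variation-of-constants singularity integrable is sound bookkeeping that the paper leaves implicit.
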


Since $\tau$ can be chosen uniformly in bounded
subsets of $Y$, the solutions which do not blow up in $Y$ must exist globally. Alternatively, we obtain a uniform in time estimate of $\|(u(t),\partial_u(t),\theta(t))\|_{Y}$, such estimate is needed to justify global solvability of the problem \eqref{Def-Prob} in $Y=H^2(\Omega)\times L^2(\Omega)\times L^2(\Omega)$.

Consider the original system \eqref{prob1} (or \eqref{Def-Prob} in $Y$). Multiplying the first equation in \eqref{prob1} by $u_t$, and the second equation in \eqref{prob1} by $\theta$, we get the system
\[
\begin{cases}
u_{tt}u_t+\eta\Delta^2 uu_t+a(t)\Delta\theta u_t=f(t,u)u_t,\ t>\tau,\ x\in\Omega,\\
\theta_t\theta-\kappa\Delta \theta\theta-a(t)\Delta u_t\theta=0,\ t>\tau,\ x\in\Omega,
\end{cases}
\]
and integrating over $\Omega$ we obtain
\begin{equation}\label{C1}
\begin{cases}
\dfrac{1}{2}\dfrac{d}{dt}\displaystyle\int_\Omega|u_t|^2dx+\dfrac{\eta}{2}\dfrac{d}{dt}\displaystyle\int_\Omega| \Delta u|^2dx+a(t)\displaystyle\int_\Omega\Delta\theta u_t dx=\dfrac{d}{dt}\displaystyle\int_\Omega\int_0^u f(t,s)dsdx,\\ \\
\dfrac{1}{2}\dfrac{d}{dt}\displaystyle\int_\Omega|\theta|^2dx+ \kappa\displaystyle\int_\Omega|\nabla \theta|^2dx-a(t)\displaystyle\int_\Omega \Delta u_t\theta dx=0,
\end{cases}
\end{equation}
for any $t>\tau$. Note that 
\begin{equation}\label{C2}
\displaystyle\int_\Omega(-\Delta)\theta u_t dx = \displaystyle\int_\Omega \theta(-\Delta)u_t dx.
\end{equation}

Combining \eqref{C1} with \eqref{C2} we have
\begin{equation}\label{C3}
\begin{split}
&\dfrac{d}{dt}\dfrac{1}{2}\left(\displaystyle\int_\Omega|u_t|^2dx+\eta\displaystyle\int_\Omega| \Delta u|^2dx+\displaystyle\int_\Omega|\theta|^2dx-2\displaystyle\int_\Omega\int_0^u f(t,s)dsdx\right)\\
&=-\kappa\displaystyle\int_\Omega|\nabla \theta|^2dx,
\end{split}
\end{equation}
for any $t>\tau$.

The total energy of the system $\mathcal{E}(t)$ associated with the solution $(u(t),\partial_tu(t),\theta(t))$ of \eqref{prob1}-\eqref{prob3} is defined by 
\begin{equation}\label{En-Funct} 
\mathcal{E}(t) = \dfrac{\eta}{2}\|u(t)\|^2_{H^2(\Omega)}+\dfrac{1}{2}\|u_t(t)\|^2_{L^2(\Omega)}+\dfrac{1}{2}\|\theta(t)\|^2_{L^2(\Omega)}-\displaystyle\int_\Omega\int_0^u f(t,s)dsdx.
\end{equation}

This identity says that the function $t\mapsto \mathcal{E}(t)$ becomes monotone decreasing.

\medskip

We obtain (from \eqref{Cond_f1}) that for each $\varepsilon>0$, there exists $C_\varepsilon>0$ such that
\begin{equation}\label{CCd}
\int_\Omega\int_0^{u(\cdot,t)} f(t,s)ds dx\leqslant \varepsilon\|u(\cdot,t)\|^2_{L^2(\Omega)} +C_\varepsilon,
\end{equation}
then the property $\mathcal{E}(t)\leqslant \mathcal{E}(\tau)$ offers an a priori estimate of the solution $(u(t),\partial_tu(t),\theta(t))$ in $H^2(\Omega)\times L^{2}(\Omega)\times L^{2}(\Omega)$. In fact,
\begin{equation*}\label{mla}
\begin{split}
\dfrac{1}{2}\left\| \begin{bmatrix} u\\ v \\ \theta \end{bmatrix}\right\|_{H^2(\Omega)\times L^{2}(\Omega)\times L^{2}(\Omega)}^2 
&\leqslant c\mathcal{E}(\tau)+c\varepsilon_0\|u(\cdot,t)\|^2_{L^2(\Omega)} +C_{\varepsilon_0}\\
&\leqslant c\mathcal{E}(\tau)+c\varepsilon_0\left\| \begin{bmatrix} u\\ v \\ \theta \end{bmatrix}\right\|_{H^2(\Omega)\times L^{2}(\Omega)\times L^{2}(\Omega)}^2 +C_{\varepsilon_0},
\end{split}
\end{equation*}
and, if we choose  $0<\varepsilon_0 < \dfrac{1}{2c}$, we get a boundedness as desired, that is,
\[
\limsup_{t \to +\infty}\left\| \begin{bmatrix} u\\ v \\ \theta \end{bmatrix}\right\|_{H^2(\Omega)\times L^{2}(\Omega)\times L^{2}(\Omega)}
 < +\infty.
\]

With this, we ensure that there exists a global solution $w(t)$ for Cauchy problem \eqref{Def-Prob} in $Y$ and it defines an evolution process  $\{S_{(a)}(t,\tau): t \geqslant \tau \in \mathbb{R}\}$, that is, 
\[
S_{(a)}(t, \tau)w_0 = w(t), \ \forall\, t \geqslant \tau\in\mathbb{R}.
\]
According to \cite{CN}
\begin{equation}\label{eq:evoper}
S_{(a)}(t,\tau)w_0 = L_{(a)}(t,\tau)w_0 + \int_\tau^t L_{(a)}(t,s)F(s,S_{(a)}(s,\tau)w_0) ds,
\quad \forall \, t \geqslant \tau \in \mathbb{R},
\end{equation}
where $\{L_{(a)}(t,\tau): t\geqslant \tau \in \mathbb{R} \}$ is the linear evolution process associated with the homogeneous  problem \eqref{Def-Prob}.

\section{Dissipativeness of the thermoelastic equation}\label{Sec-Diss} 

In this section we combine the arguments from  \cite{ASM}, \cite{AFQ}, \cite{BMTF}, \cite{CCF} and \cite{GRP} in order to prove the existence of pullback attractors for \eqref{prob1}-\eqref{prob3}. To achieve our purpose we consider the functionals
\begin{equation}\label{Def-phi}
\phi(t)= \int_\Omega uu_t dx
\end{equation} 
and
\begin{equation}\label{Def-psi}
\psi(t)=-\int_\Omega u_t(\Delta^{-1}\theta) dx.
\end{equation}

From this we define an energy functional
\begin{equation}\label{Deff}
\mathcal{L}(t)=M\mathcal{E}(t)+\delta_1\phi(t)+\delta_2\psi(t)
\end{equation}
where 
\begin{equation}\label{mald}
0<\delta_1<\delta_2<1\ \mbox{and}\ M>0
\end{equation} 
will be fixed later. We recall that $\mathcal{E}(t)$ is decreasing since $\mathcal{E}'(t)\leqslant0$ from \eqref{C3}.

\begin{theorem}\label{tmfjk}
Assume that $0<\eta\leqslant 2$. For $M>0$ sufficiently large, there exist constants $M_1,M_2>0$ such that
\begin{equation}\label{makdu}
\mathcal{L}'(t)\leqslant - M_1\mathcal{E}(t)+M_2,
\end{equation}
for all $t\geqslant0$.
\end{theorem}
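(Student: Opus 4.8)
The plan is to differentiate $\mathcal{L}(t)=M\mathcal{E}(t)+\delta_1\phi(t)+\delta_2\psi(t)$ along a solution of \eqref{prob1} and to show that the three ingredients of $\mathcal{L}$ furnish three complementary dissipation mechanisms whose sum dominates the whole energy. The identity \eqref{C3} already gives the thermal dissipation $\mathcal{E}'(t)=-\kappa\|\nabla\theta\|_{L^2(\Omega)}^2$. The multiplier $\phi=\int_\Omega uu_t\,dx$ will produce the elastic dissipation $-\eta\|\Delta u\|_{L^2(\Omega)}^2$, while the multiplier $\psi=-\int_\Omega u_t\,\Delta^{-1}\theta\,dx$, whose passage through $\Delta^{-1}$ is tailored to the coupling, will produce the missing velocity dissipation $-a(t)\|u_t\|_{L^2(\Omega)}^2$. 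Weighting these by $0<\delta_1<\delta_2<1$ and by a large $M$ should then give \eqref{makdu}.

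First I would compute $\phi'$ and $\psi'$ by integrating by parts with the boundary conditions $u=\Delta u=\theta=0$ and substituting $u_{tt}=-\eta\Delta^2u-a(t)\Delta\theta+f(t,u)$ from the first equation and $\Delta^{-1}\theta_t=\kappa\theta+a(t)u_t$ from the second, which yields
\begin{align*}
\phi'(t)&=\|u_t\|_{L^2(\Omega)}^2-\eta\|\Delta u\|_{L^2(\Omega)}^2-a(t)\int_\Omega\theta\,\Delta u\,dx+\int_\Omega u\,f(t,u)\,dx,\\
\psi'(t)&=-a(t)\|u_t\|_{L^2(\Omega)}^2+a(t)\|\theta\|_{L^2(\Omega)}^2+\eta\int_\Omega\theta\,\Delta u\,dx-\kappa\int_\Omega u_t\,\theta\,dx-\int_\Omega f(t,u)\,\Delta^{-1}\theta\,dx.
\end{align*}
The \emph{key} observation is the appearance of the strictly negative term $-a(t)\|u_t\|_{L^2(\Omega)}^2$ in $\psi'$, which the energy identity alone cannot supply.

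Next I would assemble $\mathcal{L}'(t)=-M\kappa\|\nabla\theta\|_{L^2(\Omega)}^2+\delta_1\phi'(t)+\delta_2\psi'(t)$ and collect the quadratic terms. The coefficient of $\|u_t\|_{L^2(\Omega)}^2$ is $\delta_1-\delta_2a(t)\leqslant\delta_1-\delta_2a_0$, which is made strictly negative by imposing $\delta_1<\delta_2a_0$ (consistent with $0<\delta_1<\delta_2<1$), and the coefficient of $\|\Delta u\|_{L^2(\Omega)}^2$ is $-\delta_1\eta<0$. The indefinite cross terms $\int_\Omega\theta\,\Delta u\,dx$ (with coefficient $\delta_2\eta-\delta_1a(t)$) and $\int_\Omega u_t\,\theta\,dx$ are broken up by Young's inequality, returning small multiples of $\|\Delta u\|_{L^2(\Omega)}^2$ and $\|u_t\|_{L^2(\Omega)}^2$ into the two negative leading terms and depositing the remainder into $\|\theta\|_{L^2(\Omega)}^2$. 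All the resulting positive multiples of $\|\theta\|_{L^2(\Omega)}^2$, together with the term $a(t)\|\theta\|_{L^2(\Omega)}^2\leqslant a_1\|\theta\|_{L^2(\Omega)}^2$, are then absorbed by the thermal dissipation through Poincar\'e, $-M\kappa\|\nabla\theta\|_{L^2(\Omega)}^2\leqslant-M\kappa\lambda_1\|\theta\|_{L^2(\Omega)}^2$, upon choosing $M$ large. It is in calibrating the weight carried by $\int_\Omega\theta\,\Delta u\,dx$ against $-\delta_1\eta\|\Delta u\|_{L^2(\Omega)}^2$ that I expect the hypothesis $0<\eta\leqslant2$ to be used.

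It remains to control the nonlinear contributions $\delta_1\int_\Omega u\,f(t,u)\,dx$ and $-\delta_2\int_\Omega f(t,u)\,\Delta^{-1}\theta\,dx$. For the first I would invoke the dissipativity condition \eqref{Cond_f1}, which gives $s\,f(t,s)\leqslant\lambda s^2+C$ with $\lambda<\lambda_1$ uniformly in $t$, and then Poincar\'e in the form $\lambda_1^2\|u\|_{L^2(\Omega)}^2\leqslant\|\Delta u\|_{L^2(\Omega)}^2$, so that this term is dominated by a fraction of $-\delta_1\eta\|\Delta u\|_{L^2(\Omega)}^2$ plus a constant (equivalently, it balances against the $-\int_\Omega\int_0^u f$ term already inside $\mathcal{E}$, whose size is controlled by \eqref{CCd}). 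For the coupling term I would use $|\int_\Omega f(t,u)\Delta^{-1}\theta\,dx|\leqslant\|f(t,u)\|_{L^2(\Omega)}\,\|\Delta^{-1}\theta\|_{L^2(\Omega)}$ with $\|\Delta^{-1}\theta\|_{L^2(\Omega)}\leqslant\lambda_1^{-3/2}\|\nabla\theta\|_{L^2(\Omega)}$ and Young's inequality to send $\|\nabla\theta\|_{L^2(\Omega)}^2$ into the large thermal term, leaving $\|f(t,u)\|_{L^2(\Omega)}^2$, which the subcritical growth \eqref{Cond_f2} and the boundedness of the orbit in $Y$ established in Section \ref{Sec-Exist} bound by a constant folded into $M_2$. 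Collecting everything one obtains $\mathcal{L}'(t)\leqslant-c_1\|u_t\|_{L^2(\Omega)}^2-c_2\|\Delta u\|_{L^2(\Omega)}^2-c_3\|\theta\|_{L^2(\Omega)}^2+C$ with $c_1,c_2,c_3>0$; since $\mathcal{E}(t)$ is, modulo \eqref{CCd}, comparable to $\|u_t\|_{L^2(\Omega)}^2+\|\Delta u\|_{L^2(\Omega)}^2+\|\theta\|_{L^2(\Omega)}^2$, this rearranges into \eqref{makdu}. The main obstacle is not any single computation but the simultaneous calibration of $\delta_1,\delta_2,M$ and the Young parameters so that every positive and every indefinite term is strictly dominated; the sign of the $\|u_t\|_{L^2(\Omega)}^2$ coefficient and the $\int_\Omega\theta\,\Delta u$ cross term (where $\eta\leqslant2$ enters) are the delicate points.
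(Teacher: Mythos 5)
Your plan follows the paper's proof almost verbatim: the same Lyapunov functional $\mathcal{L}=M\mathcal{E}+\delta_1\phi+\delta_2\psi$, the same multiplier identities for $\phi'$ and $\psi'$ (your computation of $\psi'$, including the crucial $-a(t)\|u_t\|_{L^2(\Omega)}^2$ term, is exactly what the paper obtains), the same Young/Poincar\'e absorption with the calibration $\delta_1<\delta_2 a_0$ (the paper takes $\tilde{C}_0\delta_2^2/C_\eta<\delta_1<a_0\delta_2/C_\kappa$) and $M$ large, and the same treatment of $\int_\Omega u f(t,u)\,dx$ via \eqref{Cond_f1} and of $\|f(t,u)\|_{L^2(\Omega)}^2$ via \eqref{Cond_f2} together with boundedness of the orbit. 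The one genuine gap is your very last step. To pass from $\mathcal{L}'(t)\leqslant -c\left(\|\Delta u\|_{L^2(\Omega)}^2+\|u_t\|_{L^2(\Omega)}^2+\|\theta\|_{L^2(\Omega)}^2\right)+C$ to \eqref{makdu} you must bound $\mathcal{E}(t)$ \emph{from above} by the quadratic terms, i.e., you need $-\int_\Omega\int_0^u f(t,s)\,ds\,dx\leqslant C\|u\|_{H^2(\Omega)}^2+C'$; but \eqref{CCd}, which you cite for the ``comparability,'' goes in the opposite direction (it bounds $+\int_\Omega\int_0^u f$ and hence $\mathcal{E}$ from \emph{below} --- that is Remark \ref{Rem_imp}, used for the corollary after the theorem, not here). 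The paper supplies the missing direction separately in \eqref{S-ideia}, namely $-\bar d\int_\Omega\int_0^u f(t,s)\,ds\,dx\leqslant\|u\|_{H^2(\Omega)}^2$ on orbits with $\|u\|_{1/2}\leqslant r$, derived from $|f(t,s)|\leqslant c(1+|s|^\rho)$ with $\rho<\frac{N}{N-4}$; this is fixable with tools you already invoked for $\|f(t,u)\|_{L^2(\Omega)}^2$, but it must be stated, since without it the inequality $\mathcal{L}'\leqslant-M_1\mathcal{E}+M_2$ does not follow. Relatedly, your guess about where $0<\eta\leqslant2$ enters is off: it is not needed to calibrate the $\int_\Omega\theta\,\Delta u\,dx$ cross term, but precisely in this final rearrangement --- in \eqref{mkaq} the quadratic part carries the weight $\frac{\eta}{2}$ on $\|u\|_{H^2(\Omega)}^2$ while $-\int_\Omega\int_0^u f$ appears in $\mathcal{E}$ with weight $1$, and the paper converts \eqref{mkaq} into $-M_1\mathcal{E}+M_2$ by comparing the resulting bracket with $\frac{\eta}{2}\mathcal{E}$, which is legitimate exactly because $\frac{\eta}{2}\leqslant1$.
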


\proof
In the following, $C_0$ and $C_1$ will denote positive constants depending on the embedding constants and initial data, respectively, as far it is necessary.

Note that
\begin{equation}\label{mtf0}
\mathcal{L}'(t)=M\mathcal{E}'(t)+\delta_1\phi'(t)+\delta_2\psi'(t).
\end{equation}

Due to \eqref{Deff} and Poincar\'e's  inequality we have  
\begin{equation}\label{mtf1}
\begin{split}
M\mathcal{E}'(t)&=-\kappa M\displaystyle\int_\Omega|\nabla \theta|^2dx\\
&\leqslant -\dfrac{\kappa M}{2}\displaystyle\int_\Omega|\nabla \theta|^2dx-\dfrac{\kappa \lambda_1M}{2}\int_\Omega|\theta|^2dx,
\end{split}
\end{equation}
where $\lambda_1$ is the first eigenvalue of negative Laplacian operator with zero Dirichlet boundary condition. Furthermore, 
\[
\begin{split}
\delta_1\phi'(t)&=\delta_1\int_\Omega |u_t|^2 dx+\delta_1\int_\Omega uu_{tt}dx\\
&=\delta_1\int_\Omega |u_t|^2dx-\delta_1\eta\int_\Omega|\Delta u|^2dx-a(t)\delta_1\int_\Omega \Delta \theta udx+\delta_1\int_\Omega f(t,u)udx\\
&=\delta_1\int_\Omega |u_t|^2dx-\delta_1\eta\int_\Omega|\Delta u|^2dx-a(t)\delta_1\int_\Omega  \theta \Delta udx+\delta_1\int_\Omega f(t,u)udx
\end{split}
\]
and from \eqref{eta-c1} we get
\[
\delta_1\phi'(t)\leqslant \delta_1\int_\Omega |u_t|^2dx-\delta_1\eta\int_\Omega|\Delta u|^2dx-a_0\delta_1\int_\Omega  \theta\Delta udx+\delta_1\int_\Omega f(t,u)udx
\]
and by Young's inequality
\[
\delta_1\phi'(t)\leqslant \delta_1\int_\Omega |u_t|^2dx-\delta_1\eta\int_\Omega|\Delta u|^2dx+\dfrac{a_0}{2}\int_\Omega | \theta|^2dx+\dfrac{a_0\delta_1^2}{2}\int_\Omega |\Delta u|^2dx+\delta_1\int_\Omega f(t,u)udx.
\]

To deal with the integral term, just notice that from dissipativeness condition \eqref{Cond_f1}, there exists $C_\nu >0$ such that
\[
\int_{\Omega} f(t,u) udx \leqslant \nu \|u\|_{L^2(\Omega)}^2
+ C_{\nu},
\]
and thus,
\begin{equation}\label{mtf2}
\begin{split}
&\delta_1\phi'(t)\\
&\leqslant \delta_1\int_\Omega |u_t|^2dx-\delta_1\eta\int_\Omega|\Delta u|^2dx+\dfrac{a_0}{2}\int_\Omega | \theta|^2dx+\dfrac{a_0\delta_1^2}{2}\int_\Omega |\Delta u|^2dx+\delta_1\nu\int_\Omega|u|^2dx+\delta_1C_{\nu}\\
&\leqslant \delta_1\int_\Omega |u_t|^2dx-\delta_1\eta\int_\Omega|\Delta u|^2dx+\dfrac{a_0}{2}\int_\Omega | \theta|^2dx+\dfrac{a_0\delta_1^2}{2}\int_\Omega |\Delta u|^2dx+\dfrac{\delta_1\nu}{\lambda_1}\int_\Omega|\nabla u|^2dx+\delta_1C_{\nu}\\
&\leqslant \delta_1\int_\Omega |u_t|^2dx-\delta_1\eta\int_\Omega|\Delta u|^2dx+\dfrac{a_0}{2}\int_\Omega | \theta|^2dx+\dfrac{a_0\delta_1^2}{2}\int_\Omega |\Delta u|^2dx+\dfrac{\delta_1\mu_1\nu}{\lambda_1}\int_\Omega|\Delta u|^2dx+\delta_1C_{\nu}\\
&= \delta_1\int_\Omega |u_t|^2dx-\delta_1\left(\eta-\frac{\mu_1\nu}{\lambda_1}\right)\int_\Omega|\Delta u|^2dx+\dfrac{a_0\delta_1^2}{2}\int_\Omega |\Delta u|^2dx+\dfrac{a_0}{2}\int_\Omega | \theta|^2dx+\delta_1C_{\nu},\\
\end{split}
\end{equation}
where $\mu_1>0$ is the embedding constant for $\|\nabla u\|_{L^2(\Omega)}^2\leqslant\mu_1\|\Delta u\|_{L^2(\Omega)}^2$, and $\nu$ is chosen such that $0<\nu<\dfrac{\lambda_1\eta}{\mu_1}$.

We also have
\[
\begin{split}
\delta_2\psi'(t)&=-\delta_2\int_\Omega u_{tt}(\Delta^{-1}\theta) dx-\delta_2\int_\Omega u_t(\Delta^{-1}\theta)_t dx\\
&\leqslant\dfrac{\eta\delta_2^2}{2}\int_\Omega|\nabla u|^2dx+\dfrac{\eta}{2}\int_\Omega|\nabla \theta|^2dx+a(t)\delta_2\int_\Omega|\theta|^2dx-\delta_2\int_\Omega f(t,u)(\Delta^{-1}\theta)dx\\
&-\delta_2\kappa\int_\Omega u_t\theta dx-\delta_2a(t)\int_\Omega|u_t|^2dx
\end{split}
\]
and from \eqref{eta-c1} we get
\[
\begin{split}
\delta_2\psi'(t)&\leqslant \dfrac{\eta\delta_2^2}{2}\int_\Omega|\nabla u|^2dx+\dfrac{\eta}{2}\int_\Omega|\nabla \theta|^2dx+a_1\delta_2\int_\Omega|\theta|^2dx-\delta_2\int_\Omega f(t,u)(\Delta^{-1}\theta)dx\\
&-\delta_2\kappa\int_\Omega u_t\theta dx-\delta_2a_0\int_\Omega|u_t|^2dx.
\end{split}
\]
Since there exists $c_0>0$ such that
\begin{equation*}\label{makditbf}
\int_\Omega|\nabla \varrho|^2dx\leqslant c_0\int_\Omega|\nabla \theta|^2dx,
\end{equation*}
where $\varrho=\Delta^{-1}\theta$, by Young's inequality  and Poincar\'e's inequality we obtain
\[
\begin{split}
&\delta_2\psi'(t) \\ 
&\leqslant  \dfrac{\eta\delta_2^2}{2}\int_\Omega|\nabla u|^2dx+\dfrac{\eta}{2}\int_\Omega|\nabla \theta|^2dx+a_1\delta_2\int_\Omega|\theta|^2dx+\dfrac{\delta_2^2}{2}\int_\Omega |f(t,u)|^2dx +\dfrac{1}{2}\int_\Omega|\Delta^{-1}\theta|^2dx\\
&+\dfrac{\kappa}{2\delta_1}\int_\Omega |\theta|^2 dx +\dfrac{\delta_2\delta_1\kappa}{2}\int_\Omega |u_t|^2 dx-\delta_2a_0\int_\Omega|u_t|^2dx\\
&\leqslant  \dfrac{\eta\delta_2^2}{2}\int_\Omega|\nabla u|^2dx+\dfrac{\eta}{2}\int_\Omega|\nabla \theta|^2dx+a_1\delta_2\int_\Omega|\theta|^2dx+\dfrac{\delta_2^2}{2}\int_\Omega |f(t,u)|^2dx +\dfrac{1}{2\lambda_1}\int_\Omega|\nabla \varrho|^2dx\\
&+\dfrac{\kappa}{2\delta_1}\int_\Omega |\theta|^2 dx+\left(\dfrac{\delta_2\delta_1\kappa}{2}-\delta_2a_0\right)\int_\Omega | u_t|^2 dx.
\end{split}
\]

Hence
\begin{equation}\label{mtf3}
\begin{split}
&\delta_2\psi'(t) \\
&\leqslant \dfrac{\eta\delta_2^2}{2}\int_\Omega|\nabla u|^2dx+\left(\dfrac{\eta}{2}+\dfrac{c_0}{2\lambda_1}\right)\int_\Omega|\nabla \theta|^2dx+a_1\delta_2\int_\Omega|\theta|^2dx+\dfrac{\delta_2^2}{2}\int_\Omega |f(t,u)|^2dx\\
&+\dfrac{\kappa}{2\delta_1}\int_\Omega |\theta|^2 dx+\left(\dfrac{\delta_2\delta_1\kappa}{2}-\delta_2a_0\right)\int_\Omega | u_t|^2 dx\\
&\leqslant \dfrac{\eta\delta_2^2}{2}\int_\Omega|\nabla u|^2dx+\left(\dfrac{\eta}{2}+\dfrac{c_0}{2\lambda_1}\right)\int_\Omega|\nabla \theta|^2dx+a_1\delta_2\int_\Omega|\theta|^2dx+\dfrac{\delta_2^2}{2}\int_\Omega |f(t,u)|^2dx\\
&+\dfrac{\kappa}{2\delta_1}\int_\Omega |\theta|^2 dx+\left(\dfrac{\delta_2\delta_1\kappa}{2}-\delta_2a_0\right)\int_\Omega | u_t|^2 dx.
\end{split}
\end{equation}

From \eqref{Cond_f2}, there exists $C>0$ such that
\[
\int_\Omega |f(t,u)|^2dx\leqslant C\int_\Omega|u|^2dx+C\int_\Omega|u|^{2\rho}dx.
\]
Since the condition $1\leqslant \rho\leqslant\frac{N}{N-4}$ implies that  $H^2(\Omega) \hookrightarrow L^{2\rho}(\Omega)$, we get
\begin{equation}\label{malh}
\int_\Omega |f(t,u)|^2dx \leqslant C\int_\Omega|u|^2dx + \bar{C}\leqslant \bar{C}_1\int_\Omega|\Delta u|^2dx + \bar{C}_2
\end{equation}
whenever $\|u\|_{H^2(\Omega)}\leqslant r$ (as \cite{Karina} and \cite{Carv}).

Let $C_\eta:=\eta-\frac{\mu_1\nu}{\lambda_1}>0$, combining \eqref{mtf0} together with \eqref{mtf1}, \eqref{mtf2}, \eqref{mtf3} and \eqref{malh}, we obtain
\[
\begin{split}
\mathcal{L}'(t)&\leqslant \left(\dfrac{\eta}{2}+\dfrac{c_0}{2\lambda_1}-\dfrac{ M\kappa}{2}\right)\int_\Omega|\nabla \theta|^2dx+\left[\delta_1\left(1+\dfrac{\delta_2\kappa}{2}\right)-\delta_2a_0\right]\int_\Omega | u_t|^2 dx\\
&+\! \left(\dfrac{\mu_1\eta\delta_2^2}{2}+\dfrac{\bar{C}_1\delta_2^2}{2}+\dfrac{a_0\delta_1^2}{2}-C_\eta\delta_1\right) \!\! \int_\Omega|\Delta u|^2dx+\left(\dfrac{a_0}{2}+ \dfrac{\kappa}{2\delta_1}+a_1\delta_2- \dfrac{\kappa\lambda_1M}{2}\right)\int_\Omega|\theta|^2dx\\
&+\dfrac{\delta_2^2\bar{C}_2}{2}+\delta_1C_\nu,
\end{split}
\]
and by \eqref{mald}
\[
\begin{split}
\mathcal{L}'(t)&\leqslant \left(\dfrac{\eta}{2}+\dfrac{c_0}{2\lambda_1}-\dfrac{ M\kappa}{2}\right)\int_\Omega|\nabla \theta|^2dx+\left[\delta_1\left(1+\dfrac{\delta_2\kappa}{2}\right)-\delta_2a_0\right]\int_\Omega | u_t|^2 dx\\
&+\left(\tilde{C}_0\delta_2^2-C_\eta\delta_1\right)\int_\Omega|\Delta u|^2dx+\left(\dfrac{a_0}{2}+ \dfrac{\kappa}{2\delta_1}+a_1\delta_2- \dfrac{\kappa\lambda_1M}{2}\right)\int_\Omega|\theta|^2dx\\
&+\dfrac{\delta_2^2\bar{C}_2}{2}+\delta_1C_\nu,
\end{split}
\]
where $\tilde{C}_0=\frac{\mu_1\eta}{2}+\frac{\bar{C}_1 }{2}+\frac{a_0}{2}>0$.

Let $C_\kappa=1+\frac{\kappa}{2}>0$. Now, fixed $0<\delta_2<1$, choose $\delta_1$ such that
\[
0<\dfrac{\tilde{C}_0}{C_\eta}\delta_2^2<\delta_1<\dfrac{a_0}{C_\kappa}\delta_2,
\]
and, thus
\[
\tilde{C}_0\delta_2^2-C_\eta\delta_1<0\quad \mbox{and}\quad \left(1+\dfrac{\delta_2\kappa}{2}\right)\delta_1<C_\kappa \delta_1<\delta_2a_0.
\]

Finally, choose $M$ sufficient large such that 
\[
\dfrac{\eta}{2}+\dfrac{c_0}{2\lambda_1}-\dfrac{ M\kappa}{2}<0\quad  \mbox{and}\quad \dfrac{a_0}{2}+ \dfrac{\kappa}{2\delta_1}+a_1\delta_2- \dfrac{\kappa\lambda_1M}{2}<0.
\]

With these choices, we will have
\[
\begin{split}
\mathcal{L}'(t)&\leqslant \left[\delta_1\left(1+\dfrac{\delta_2\kappa}{2}\right)-\delta_2a_0\right]\int_\Omega | u_t|^2 dx+\left(\tilde{C}_0\delta_2^2-C_\eta\delta_1\right)\int_\Omega|\Delta u|^2dx\\
&+\left(\dfrac{a_0\delta_1^2}{2}+ \dfrac{\kappa}{2\delta_1}+a_1\delta_2- \dfrac{\kappa\lambda_1M}{2}\right)\int_\Omega|\theta|^2dx+\dfrac{\delta_2^2\bar{C}_2}{2}+\delta_1C_\nu.
\end{split}
\]

Thus
\begin{equation}\label{mkaq}
\mathcal{L}'(t)\leqslant -\bar{M}_1\left(\dfrac{\eta}{2}\|u(t)\|^2_{H^2(\Omega)}+\dfrac{1}{2}\|u_t(t)\|^2_{L^2(\Omega)}+\frac{1}{2}\|\theta(t)\|^2_{L^2(\Omega)}\right)+M_2,
\end{equation}
where 
\[
\bar{M}_1=\min\left\{2\delta_1+\delta_2\kappa-2\delta_2a_0, \dfrac{2\tilde{C}_0\delta_2^2}{\eta}-\dfrac{2C_\eta\delta_1 }{\eta},  a_0\delta_1^2+ \dfrac{\kappa}{\delta_1}+2a_1\delta_2- \kappa\lambda_1M  \right\}>0,
\]
and $M_2=\frac{\delta_2^2\bar{C}_2}{2}+\delta_1C_\nu$.

Now we observe that if $u \in H^2(\Omega) \hookrightarrow
L^{2N/(N-4)}(\Omega) $, then
$$
|u|^{\rho+1} \in L^{2N/\big((N-4)(\rho+1)\big)}(\Omega)
\hookrightarrow L^1(\Omega)
$$
for all $1 < \rho < \frac{N}{N-4}$, and our hypothesis on $f$
implies that $|f(t,s)| \leqslant  c(1 + |s|^{\rho})$,
$s\in \mathbb{R}$.
Therefore, we can find a constant $\bar{c}>1$ such that for all
$u \in X^{1/2}$,
\[
-\int_{\Omega}\int_0^{u} f(t,s)dsdx \leqslant \bar{c} \| u\|_{1/2}^2 (1+ \| u\|_{1/2}^{\rho -1}),
\]
and therefore
\begin{equation}\label{S-ideia}
-\bar{d}\int_{\Omega}\int_0^{u}f(t,s) ds dx
\leqslant \| u\|_{H^2(\Omega)}^2,
\end{equation}
whenever $\| u\|_{1/2} \leqslant r$ and $ \bar{d}= \frac{1}{\bar{c}(1+ r^{\rho-1})}<1$.

Thanks to \eqref{mkaq} and \eqref{S-ideia} we deduce that (since $\bar{d} <1$)
\[
\begin{split}
\mathcal{L}'(t)&\leqslant -\dfrac{\bar{M}_1}{2}\left(\dfrac{\eta}{2}\|u(t)\|^2_{H^2(\Omega)}+\dfrac{1}{2}\|u_t(t)\|^2_{L^2(\Omega)}+\frac{1}{2}\|\theta(t)\|^2_{L^2(\Omega)}\right) -\dfrac{\bar{M}_1}{2}\dfrac{\eta}{2}\|u(t)\|^2_{H^2(\Omega)}+M_2\\
&\leqslant -\dfrac{\bar{M}_1}{2}\left(\dfrac{\eta}{2}\|u(t)\|^2_{H^2(\Omega)}+\dfrac{1}{2}\|u_t(t)\|^2_{L^2(\Omega)}+\frac{1}{2}\|\theta(t)\|^2_{L^2(\Omega)}\right) +\dfrac{\bar{M}_1}{2}\dfrac{\eta}{2}\bar{d}\int_\Omega\int_0^uf(s)ds+M_2\\
&\leqslant -\dfrac{\bar{M}_1\bar{d}}{2}\left(\dfrac{\eta}{2}\|u(t)\|^2_{H^2(\Omega)}+\dfrac{1}{2}\|u_t(t)\|^2_{L^2(\Omega)}+\frac{1}{2}\|\theta(t)\|^2_{L^2(\Omega)} -\dfrac{\eta}{2}\int_\Omega\int_0^uf(s)ds\right)+M_2.
\end{split}
\]

Since $0<\eta\leqslant2$, from \eqref{En-Funct} we conclude 
\[
\mathcal{L}'(t)\leqslant -M_1\mathcal{E}(t)+M_2,
\]
where $M_1=\dfrac{\bar{M}_1\eta\bar{d}}{4}>0$, for all $t\geqslant0$. This concludes the proof of the theorem.
\qed

\begin{remark}\label{Rem_imp}
For every $t\in\mathbb{R}$, from \eqref{CCd} we have
\[
\begin{split}
\mathcal{E}(t) &= \dfrac{\eta}{2}\|u(t)\|^2_{H^2(\Omega)}+\dfrac{1}{2}\|u_t(t)\|^2_{L^2(\Omega)}+\dfrac{1}{2}\|\theta(t)\|^2_{L^2(\Omega)}-\displaystyle\int_\Omega\int_0^u f(t,s)dsdx\\
&\geqslant \dfrac{\eta}{2}\|u(t)\|^2_{H^2(\Omega)}+\dfrac{1}{2}\|u_t(t)\|^2_{L^2(\Omega)}+\dfrac{1}{2}\|\theta(t)\|^2_{L^2(\Omega)}-\varepsilon\|u(t)\|_{L^2(\Omega)}^2-C_\varepsilon\\
&\geqslant \left(\dfrac{\eta}{2}-\dfrac{\varepsilon C_0}{2}\right)\|u(t)\|^2_{H^2(\Omega)}+\dfrac{1}{2}\|u_t(t)\|^2_{L^2(\Omega)}+\dfrac{1}{2}\|\theta(t)\|^2_{L^2(\Omega)}-C_\varepsilon
\end{split}
\]
where $\varepsilon$ is such that $\varepsilon<\dfrac{\eta}{C_0}$, that is
\[
\|\Delta u(t)\|_{L^2(\Omega)}^2+\|u_t(t)\|_{L^2(\Omega)}^2+\|\theta(t)\|^2_{L^2(\Omega)}\leqslant C_1\mathcal{E}(t)+C'_\varepsilon,
\]
where $C_1^{-1}=\min\left\{\left(\dfrac{\eta}{2}-\dfrac{\varepsilon C_0}{2}\right),\dfrac{1}{2}\right\}$.
\end{remark}

\begin{theorem}\label{tmtf}
For $M>0$ sufficiently large, there exist constants $\beta_1,\beta_2,\beta_3,\beta_4>0$ such that
\begin{equation}\label{masxc}
\beta_3\mathcal{E}(t)-\beta_4\leqslant\mathcal{L}(t)\leqslant\beta_1\mathcal{E}(t)+\beta_2, \qquad t\geqslant0.
\end{equation}
\end{theorem}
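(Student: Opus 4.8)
The plan is to show that the two auxiliary functionals $\phi$ and $\psi$ are dominated by the purely quadratic part of the energy, and then to reabsorb that quadratic part into $\mathcal{E}(t)$ itself by means of the a priori bound already recorded in Remark \ref{Rem_imp}; the equivalence \eqref{masxc} then follows by taking $M$ large. Throughout I write $E_0(t)=\frac{\eta}{2}\|u(t)\|_{H^2(\Omega)}^2+\frac12\|u_t(t)\|_{L^2(\Omega)}^2+\frac12\|\theta(t)\|_{L^2(\Omega)}^2$ for the quadratic energy, so that by \eqref{En-Funct} one has $\mathcal{E}(t)=E_0(t)-\int_\Omega\int_0^u f(t,s)\,ds\,dx$.

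First I would estimate $\phi(t)=\int_\Omega u u_t\,dx$ from \eqref{Def-phi}. By the Cauchy--Schwarz and Young inequalities, followed by the Poincar\'e-type embedding $\|u\|_{L^2(\Omega)}^2\leqslant C_0\|u\|_{H^2(\Omega)}^2$, one obtains
\[
|\phi(t)|\leqslant \tfrac12\|u\|_{L^2(\Omega)}^2+\tfrac12\|u_t\|_{L^2(\Omega)}^2\leqslant \tfrac{C_0}{2}\|u\|_{H^2(\Omega)}^2+\tfrac12\|u_t\|_{L^2(\Omega)}^2.
\]
For $\psi(t)=-\int_\Omega u_t(\Delta^{-1}\theta)\,dx$ from \eqref{Def-psi}, the only additional ingredient is the spectral bound $\|\Delta^{-1}\theta\|_{L^2(\Omega)}\leqslant \lambda_1^{-1}\|\theta\|_{L^2(\Omega)}$, which holds because the largest eigenvalue of $(-\Delta)^{-1}$ is $\lambda_1^{-1}$; then Cauchy--Schwarz and Young give
\[
|\psi(t)|\leqslant \tfrac{1}{2\lambda_1}\|u_t\|_{L^2(\Omega)}^2+\tfrac{1}{2\lambda_1}\|\theta\|_{L^2(\Omega)}^2.
\]
Combining these and recalling $0<\delta_1<\delta_2<1$ from \eqref{mald}, there is a constant $\tilde K=\tilde K(\delta_1,\delta_2,C_0,\lambda_1,\eta)>0$, \emph{independent of} $M$, such that $|\delta_1\phi(t)+\delta_2\psi(t)|\leqslant \tilde K E_0(t)$.

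Next I would convert this into a bound expressed through $\mathcal{E}$. Exactly as in Remark \ref{Rem_imp} (which rests on the dissipativeness estimate \eqref{CCd} with $\varepsilon<\eta/C_0$), one has $E_0(t)\leqslant C_1\mathcal{E}(t)+C_\varepsilon'$ for suitable constants $C_1,C_\varepsilon'>0$. Substituting this into the previous line gives $|\delta_1\phi(t)+\delta_2\psi(t)|\leqslant \tilde K C_1\mathcal{E}(t)+\tilde K C_\varepsilon'$.

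Finally, from the definition $\mathcal{L}(t)=M\mathcal{E}(t)+\delta_1\phi(t)+\delta_2\psi(t)$ in \eqref{Deff} I would read off both inequalities simultaneously:
\[
(M-\tilde K C_1)\mathcal{E}(t)-\tilde K C_\varepsilon'\leqslant \mathcal{L}(t)\leqslant (M+\tilde K C_1)\mathcal{E}(t)+\tilde K C_\varepsilon'.
\]
Choosing $M$ large enough that $M>\tilde K C_1$ makes the lower coefficient positive, so \eqref{masxc} holds with $\beta_1=M+\tilde K C_1$, $\beta_2=\beta_4=\tilde K C_\varepsilon'$ and $\beta_3=M-\tilde K C_1>0$. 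The one genuinely non-quadratic step is the passage from $E_0$ to $\mathcal{E}$, i.e. controlling the potential term $\int_\Omega\int_0^u f(t,s)\,ds\,dx$; this is precisely where the dissipativeness hypothesis \eqref{Cond_f1} enters, but since it has already been carried out in Remark \ref{Rem_imp}, the remaining obstacle is merely the bookkeeping of constants together with the final choice of $M$.
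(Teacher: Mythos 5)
Your proposal is correct and follows essentially the same route as the paper's proof: bound $|\phi|$ and $|\psi|$ by the quadratic part of the energy via Cauchy--Schwarz/Young together with Poincar\'e-type and spectral estimates, convert that quadratic part into $\mathcal{E}(t)$ plus a constant through Remark \ref{Rem_imp}, and then absorb everything by taking $M$ large so that $M-\tilde K C_1>0$. The only difference is cosmetic bookkeeping (you bound $\delta_1\phi+\delta_2\psi$ jointly through $E_0$, while the paper estimates each functional separately against $\tilde C_i\mathcal{E}(t)+\tilde C_{i+1}$), which does not change the argument.
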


\proof Note that from Remark \ref{Rem_imp} and \eqref{Def-phi}, there exist $\tilde{C}_1,\tilde{C}_2>0$ such that
\begin{equation}\label{maldd}
\begin{split}
|\phi(t)|&\leqslant\dfrac{1}{2} \|u_t(t)\|_{L^2(\Omega)}^2+\dfrac{\mu_1}{2\lambda_1} \|\Delta u(t)\|_{L^2(\Omega)}^2\\
&\leqslant\max\left\{\dfrac{1}{2},\dfrac{\mu_1}{2\lambda_1}\right\}\left(\|\Delta u(t)\|_{L^2(\Omega)}^2+\|u_t(t)\|_{L^2(\Omega)}^2+\|\theta(t)\|^2_{L^2(\Omega)}\right)\\
&\leqslant \tilde{C}_1\mathcal{E}(t)+\tilde{C}_2.
\end{split}
\end{equation}

Due to Remark \ref{Rem_imp} and \eqref{Def-psi} we also have
\begin{equation}\label{malddla}
\begin{split}
|\psi(t)|&\leqslant C_{\delta_0} \|u_t(t)\|_{L^2(\Omega)}^2+\delta_0 \| \theta(t)\|_{L^2(\Omega)}^2\\
&\leqslant \max\{C_{\delta_0},\delta_0\}\left(\|\Delta u(t)\|_{L^2(\Omega)}^2+\|u_t(t)\|_{L^2(\Omega)}^2+\|\theta(t)\|^2_{L^2(\Omega)}\right)\\
&\leqslant \tilde{C}_3\mathcal{E}(t)+\tilde{C}_4,
\end{split}
\end{equation}
where $C_{\delta_0}>0$.

Now, observe that the constants $\delta_1>0$ and $\delta_2>0$ were fixed in the proof of the Theorem \ref{tmfjk}. Then, combining \eqref{Deff} with \eqref{maldd} and \eqref{malddla} we obtain
\[
\mathcal{L}(t)\leqslant\beta_1\mathcal{E}(t)+\beta_2,\ t\geqslant0.
\]

On the other hand, since
\[
M\mathcal{E}(t)=\mathcal{L}(t)-\delta_1\phi(t)-\delta_2\psi(t)
\]
from \eqref{maldd} and \eqref{malddla},
\[
(M-\delta_1\tilde{C}_1-\delta_2\tilde{C}_3)\mathcal{E}(t)-\delta_1\tilde{C}_2-\delta_2\tilde{C}_4\leqslant  \mathcal{L}(t),
\]
and taking $M>0$ sufficiently large such that $M-\delta_1\tilde{C}_1-\delta_2\tilde{C}_3>0$, we obtain
\[
\beta_3\mathcal{E}(t)-\beta_4\leqslant \mathcal{L}(t).
\]
This concludes the proof of the theorem.
\qed

\begin{corollary}
Under the same conditions of the Theorem \ref{tmfjk} and Theorem \ref{tmtf}, if $B \subset Y$ is a bounded set, and $(u,v,\theta):[\tau, \tau+T] \to Y$, $T>0$, is the solution of \eqref{prob1}-\eqref{prob3}
starting in $(u_0,v_0,\theta_0) \in B$, then there exist positive constants $\bar \omega$, $\gamma_1=\gamma_1(B)$ and $\gamma_2$, such that
\begin{equation}\label{d-e-n}
\|\Delta u(t)\|_{L^2(\Omega)}^2+\|u_t(t)\|_{L^2(\Omega)}^2+\|\theta(t)\|^2_{L^2(\Omega)} \leqslant \gamma_1 e^{-\bar{\omega}(t-\tau)}
+ \gamma_2, \quad t \in [\tau, \tau+T].
\end{equation}
\end{corollary}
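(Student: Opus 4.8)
The plan is to reduce the estimate \eqref{d-e-n} to a linear first-order differential inequality for the modified energy $\mathcal{L}$, solve it with an integrating factor, and then translate the resulting exponential decay back to the physical norms through the comparison estimates already at hand. First I would combine the dissipative inequality \eqref{makdu} of Theorem \ref{tmfjk} with the \emph{upper} bound in Theorem \ref{tmtf}. Since $\mathcal{L}(t)\leqslant\beta_1\mathcal{E}(t)+\beta_2$ rearranges to $\mathcal{E}(t)\geqslant\beta_1^{-1}(\mathcal{L}(t)-\beta_2)$, substituting into \eqref{makdu} produces a closed inequality in $\mathcal{L}$ alone,
\[
\mathcal{L}'(t)\leqslant-\frac{M_1}{\beta_1}\mathcal{L}(t)+\Bigl(M_2+\frac{M_1\beta_2}{\beta_1}\Bigr),\qquad t\in[\tau,\tau+T].
\]
Writing $\bar{\omega}:=M_1/\beta_1>0$ and $K:=M_2+M_1\beta_2/\beta_1$, this is $\mathcal{L}'(t)+\bar{\omega}\mathcal{L}(t)\leqslant K$.

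Next I would multiply by the integrating factor $e^{\bar{\omega}t}$ and integrate from $\tau$ to $t$, obtaining
\[
\mathcal{L}(t)\leqslant e^{-\bar{\omega}(t-\tau)}\mathcal{L}(\tau)+\frac{K}{\bar{\omega}}\bigl(1-e^{-\bar{\omega}(t-\tau)}\bigr)\leqslant e^{-\bar{\omega}(t-\tau)}\mathcal{L}(\tau)+\frac{K}{\bar{\omega}}.
\]
I then invoke the \emph{lower} bound $\beta_3\mathcal{E}(t)-\beta_4\leqslant\mathcal{L}(t)$ of Theorem \ref{tmtf} to transfer the decay of $\mathcal{L}$ to $\mathcal{E}$, giving
\[
\mathcal{E}(t)\leqslant\frac{1}{\beta_3}\Bigl(e^{-\bar{\omega}(t-\tau)}\mathcal{L}(\tau)+\frac{K}{\bar{\omega}}+\beta_4\Bigr).
\]
Because the initial data lie in the bounded set $B$, the upper bound in Theorem \ref{tmtf} shows $\mathcal{L}(\tau)\leqslant\beta_1\mathcal{E}(\tau)+\beta_2$ is controlled by a constant depending only on $B$, and this is exactly where the $B$-dependence of $\gamma_1$ enters. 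Finally, Remark \ref{Rem_imp} furnishes
\[
\|\Delta u(t)\|_{L^2(\Omega)}^2+\|u_t(t)\|_{L^2(\Omega)}^2+\|\theta(t)\|_{L^2(\Omega)}^2\leqslant C_1\mathcal{E}(t)+C'_\varepsilon,
\]
so combining the last two displays yields \eqref{d-e-n} with $\gamma_1=C_1\beta_3^{-1}\mathcal{L}(\tau)$ (hence $\gamma_1=\gamma_1(B)$) and $\gamma_2=C_1\beta_3^{-1}(K/\bar{\omega}+\beta_4)+C'_\varepsilon$.

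Since the whole argument is a Gronwall-type estimate, no individual step is genuinely delicate; the point that needs care is the bookkeeping of the time origin. The constants $M_1,M_2,\beta_1,\dots,\beta_4$ in Theorems \ref{tmfjk} and \ref{tmtf} are uniform in $t$ (they depend only on the embedding constants and on the fixed parameters $\delta_1,\delta_2,M$), so the differential inequality for $\mathcal{L}$ holds along the entire trajectory on $[\tau,\tau+T]$ irrespective of $\tau$. It is precisely this uniformity that allows the integrating-factor computation to produce the clean decay factor $e^{-\bar{\omega}(t-\tau)}$ anchored at the initial time $\tau$ rather than at a fixed point; without it one could only assert decay relative to a distinguished origin, which would be useless for the pullback framework where $\tau\to-\infty$.
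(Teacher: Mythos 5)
Your proposal is correct and is essentially identical to the paper's own proof: the paper likewise combines \eqref{makdu} with \eqref{masxc} to get $\mathcal{L}'(t)\leqslant-\sigma_1\mathcal{L}(t)+\sigma_2$ with $\sigma_1=M_1/\beta_1$ and $\sigma_2=M_1\beta_2/\beta_1+M_2$, integrates to obtain $\mathcal{L}(t)\leqslant\mathcal{L}(\tau)e^{-\sigma_1(t-\tau)}+\sigma_2/\sigma_1$, and then returns to the physical norms via the lower bound in \eqref{masxc} and Remark \ref{Rem_imp}, with $\gamma_1=\gamma_1(\mathcal{L}(\tau))$. Your additional observations --- that $\mathcal{L}(\tau)$ is controlled by a constant depending only on $B$ (whence $\gamma_1=\gamma_1(B)$) and that the uniformity in $t$ of the constants anchors the decay at the moving initial time $\tau$ --- are correct refinements the paper leaves implicit.
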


\proof From \eqref{makdu} and \eqref{masxc}, we obtain
\[
\mathcal{L}'(t)\leqslant -\sigma_1\mathcal{L}(t)+\sigma_2,
\]
where $\sigma_1=\dfrac{M_1}{\beta_1}$ and $\sigma_2=\dfrac{M_1\beta_2}{\beta_1}+M_2$, and  thus,
\[
\mathcal{L}(t)\leqslant \mathcal{L}(\tau)e^{-\sigma_1( t-\tau)}+\sigma_2 e^{-\sigma_1 t} \int_\tau^t e^{\sigma_1 s} ds\leqslant  \mathcal{L}(\tau)e^{-\sigma_1( t-\tau)}+\dfrac{\sigma_2}{\sigma_1}.
\]

Again, by \eqref{masxc} together with Remark \ref{Rem_imp}, we conclude
\[
\|\Delta u(t)\|_{L^2(\Omega)}^2+\|u_t(t)\|_{L^2(\Omega)}^2+\|\theta(t)\|^2_{L^2(\Omega)} \leqslant \gamma_1 e^{-\sigma_1( t-\tau)}+\gamma_2,
\]
where $\gamma_1=\gamma_1(\mathcal{L}(\tau))>0$ and $\gamma_2>0$.
\qed

\begin{theorem}
Under the same conditions of  Theorem \ref{tmfjk} and Theorem \ref{tmtf}, the problem \eqref{prob1}-\eqref{prob3} has a pullback attractor
$\{\mathcal{A}_{(a)}(t): t\in \mathbb{R}\}$ in $Y$ and
\[
\bigcup_{t\in \mathbb{R}}\mathcal{A}_{(a)}(t)\subset Y.
\]
\end{theorem}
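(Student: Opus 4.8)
The plan is to apply Theorem \ref{theorem pullback}, so it suffices to verify that the nonlinear evolution process $\{S_{(a)}(t,\tau): t\geqslant\tau\in\mathbb{R}\}$ generated by \eqref{Def-Prob} in $Y$ is both \emph{pullback strongly bounded dissipative} and \emph{pullback asymptotically compact}. The existence of this process in $Y$, together with its variation-of-constants representation, has already been secured in Section \ref{Sec-Exist}, culminating in \eqref{eq:evoper}.

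For the dissipativeness I would start from the estimate \eqref{d-e-n} furnished by the preceding corollary. Since the constant $\gamma_2$ there is independent of the initial time $\tau$ and of the initial datum (it enters only through $\gamma_1=\gamma_1(B)$, which multiplies the transient), letting $\tau\to-\infty$ the term $\gamma_1 e^{-\bar{\omega}(t-\tau)}$ vanishes. Because $\|\Delta u\|_{L^2(\Omega)}$ is equivalent to $\|u\|_{H^2(\Omega)}$ under the boundary conditions, the family of balls $B(t)=\{w\in Y:\|w\|_Y^2\leqslant 2\gamma_2\}$ is pullback absorbing, uniformly for all $s\leqslant t$, which is exactly the pullback strongly bounded dissipativeness required. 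Crucially, $\gamma_2$ does not depend on $t$, so the same radius works for every $t\in\mathbb{R}$; this time-independence is what will upgrade the local-in-$t$ boundedness coming from Theorem \ref{theorem pullback} to the global containment $\bigcup_{t\in\mathbb{R}}\mathcal{A}_{(a)}(t)\subset Y$ at the end.

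For the asymptotic compactness I would invoke Theorem \ref{pull-asym-comp} using the splitting $S_{(a)}(t,s)=L_{(a)}(t,s)+U_{(a)}(t,s)$ read off from \eqref{eq:evoper}, where $U_{(a)}(t,s)w_0=\int_s^t L_{(a)}(t,r)F(r,S_{(a)}(r,s)w_0)\,dr$. Two properties are needed. First, $\|L_{(a)}(t,s)w_0\|_Y\leqslant k(t-s,r)$ with $k(\sigma,r)\to 0$ as $\sigma\to\infty$: this is the exponential decay of the linear thermoelastic process, which follows from the analyticity and exponential stability established in \cite{LT} (the estimate \eqref{1.65} alone yields only boundedness at $\alpha=\beta$, so the stability input is essential). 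Second, $U_{(a)}(t,s)$ must be compact: by Corollary \ref{corol:lips} the nonlinearity $F$ sends bounded subsets of $Y$ into bounded subsets of the higher-regularity space $Y^\alpha$ with $\alpha\in(0,1)$, and the smoothing bound \eqref{1.65} for $L_{(a)}$ shows the integral defining $U_{(a)}(t,s)$ takes values in $Y^\alpha$; since $\Lambda$ has compact resolvent the embedding $Y^\alpha\hookrightarrow Y$ is compact, whence $U_{(a)}(t,s)$ maps bounded sets into relatively compact sets. Theorem \ref{pull-asym-comp} then gives pullback asymptotic compactness.

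With both hypotheses verified, Theorem \ref{theorem pullback} produces a pullback attractor $\{\mathcal{A}_{(a)}(t):t\in\mathbb{R}\}$ for which $\bigcup_{\tau\leqslant t}\mathcal{A}_{(a)}(\tau)$ is bounded for each $t$; combining this with the time-independence of the absorbing radius $\gamma_2$ yields $\bigcup_{t\in\mathbb{R}}\mathcal{A}_{(a)}(t)\subset Y$. The step I expect to be the main obstacle is the compactness of $U_{(a)}(t,s)$: one must check carefully that the convolution with $L_{(a)}$ of the $Y^\alpha$-valued map $F$ actually lands in $Y^\alpha$ uniformly over the pullback-absorbing family, which amounts to controlling the integrable singularity $(t-r)^{-\alpha}$ in \eqref{1.65} near $r=t$ so that the integral converges in $Y^\alpha$, and to extracting the uniform decay of the linear process from the analyticity/exponential-stability input of \cite{LT}.
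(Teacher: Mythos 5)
Your architecture is the paper's: pullback strong bounded dissipativeness extracted from \eqref{d-e-n}, the splitting $S_{(a)}(t,\tau)=L_{(a)}(t,\tau)+U_{(a)}(t,\tau)$ read off from \eqref{eq:evoper}, compactness of $U_{(a)}$ via a compact embedding in the fractional power scale (the paper phrases it as $X^{1/2}\stackrel{f^e(t,\cdot)}{\longrightarrow}X^{-\alpha/2}\hookrightarrow X^{-1/2}$ with the last inclusion compact since $\alpha<1$), and finally Theorems \ref{pull-asym-comp} and \ref{theorem pullback}. Your treatment of dissipativeness and of the compactness of $U_{(a)}$ is sound and, if anything, more explicit than the paper's. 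But there is one genuine gap, and it sits exactly at the step you flagged only in passing: you source the decay $\|L_{(a)}(t,s)\|_{\mathcal{L}(Y)}\leqslant Ke^{-\alpha(t-s)}$ from the analyticity and exponential stability established in \cite{LT}. That reference concerns the \emph{autonomous} semigroup generated by a frozen-coefficient thermoelastic operator (and with free--clamped boundary conditions, not the hinged/Dirichlet ones used here). The linear problem in this paper is genuinely non-autonomous: $L_{(a)}(t,s)$ is not $e^{-(t-s)A_{(a)}(s)}$, and exponential stability of each frozen-time semigroup $e^{-\tau A_{(a)}(t)}$ does \emph{not} in general transfer to uniform exponential decay of the evolution process --- this implication is known to fail without additional hypotheses (smallness or slowness of the variation of $t\mapsto A_{(a)}(t)$), and the H\"older condition \eqref{eta-c2} alone provides no such control. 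So as written, the hypothesis $\|L_{(a)}(t,s)x\|\leqslant k(t-s,r)$ with $k(\sigma,r)\to0$ of Theorem \ref{pull-asym-comp} is not justified.

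This is precisely where the paper does its real work, and the fix is already in your hands: rerun the Lyapunov-functional argument of Theorems \ref{tmfjk} and \ref{tmtf} for the \emph{linear} problem, i.e.\ with $f\equiv0$, using $\mathcal{E}(t)=\frac{\eta}{2}\|u(t)\|^2_{H^2(\Omega)}+\frac{1}{2}\|u_t(t)\|^2_{L^2(\Omega)}+\frac{1}{2}\|\theta(t)\|^2_{L^2(\Omega)}$ and $\mathcal{L}=M\mathcal{E}+\delta_1\phi+\delta_2\psi$. With $f\equiv0$ the constant terms ($C_\nu$, $\bar{C}_2$) disappear, giving $\mathcal{L}'(t)\leqslant-c_1\mathcal{E}(t)$ and $c_2\mathcal{E}(t)\leqslant\mathcal{L}(t)\leqslant c_3\mathcal{E}(t)$, hence $\mathcal{E}'(t)\leqslant-c_0\mathcal{E}(t)$ with constants uniform in $t$ because $0<a_0\leqslant a(t)\leqslant a_1$ by \eqref{eta-c1}; this yields $\|L_{(a)}(t,s)\|_{\mathcal{L}(Y)}\leqslant Ke^{-\alpha(t-s)}$ directly for the process, which is what Theorem \ref{pull-asym-comp} needs. (Your other worry, the singularity in the integral defining $U_{(a)}$, is harmless: by Corollary \ref{corol:lips} the map $F$ takes bounded sets of $Y$ into $Y^{\alpha}$, and \eqref{1.65} applied with $\beta=\alpha$ carries no singular factor, so the smoothing argument closes without difficulty.)
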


\proof From estimate \eqref{d-e-n} it is easy to check that the evolution process $\{S_{(a)}(t,\tau): t \geqslant \tau \in \mathbb{R}\}$ associated with
\eqref{prob1}-\eqref{prob3} is pullback strongly bounded.

Hence, applying the same ideas of the proofs of the Theorem \ref{tmfjk} and Theorem \ref{tmtf}, we obtain that
the family of evolution process $\{S_{(a)}(t,\tau): t \geqslant \tau \in \mathbb{R}\}$ is pullback asymptotically
compact (see Theorem \ref{pull-asym-comp}). In fact, from \eqref{eq:evoper} we write
\[
S_{(a)}(t,\tau)=L_{(a)}(t,\tau)+U_{(a)}(t,\tau),
\]
where
\[
U_{(a)}(t,\tau):= \int_\tau^t L_{(a)}(t,s)
F(s,S_{(a)}(t,s) ) ds.
\]

With the same arguments used in the proof of the Theorem \ref{tmfjk} with $f\equiv0$ in \eqref{prob1} and with the functionals 
\[
\mathcal{E}(t) = \dfrac{\eta}{2}\|u(t)\|^2_{H^2(\Omega)}+\dfrac{1}{2}\|u_t(t)\|^2_{L^2(\Omega)}+\dfrac{1}{2}\|\theta(t)\|^2_{L^2(\Omega)}
\]
and
\[
\mathcal{L}(t)=M\mathcal{E}(t)+\delta_1\phi(t)+\delta_2\psi(t)
\]
where $\phi$ is defined in \eqref{Def-phi}  and $\psi$ is defined in \eqref{Def-psi}, we get from \eqref{makdu} that there exist $c_1>0$ such that
\[
\mathcal{L}'(t)\leqslant - c_1\mathcal{E}(t)
\]
and from arguments used in the proof of the Theorem \ref{tmtf} with $f\equiv0$  in \eqref{prob1}, by \eqref{masxc} we get $c_2,c_3>0$ such that
\[
c_2\mathcal{E}(t)\leqslant\mathcal{L}(t)\leqslant c_3\mathcal{E}(t)
\]
and hence
\[
\mathcal{E}'(t)\leqslant -c_0 \mathcal{E}(t)
\]
for some $c_0>0$. This ensures that exist constants $K,\alpha>0$ such that
\[
\|L_{(a)}(t,\tau)\|_{\mathcal{L}(Y)}\leqslant K e^{-\alpha(t-\tau)},\ \mbox{for all}\ t\geqslant\tau.
\]

The family of evolution process $\{U_{(a)}(t, \tau) :  t \geqslant \tau\in \mathbb{R}\}$ is compact in $Y$. In fact, the compactness of $U_{(a)}(t,\tau)$ follows easily from the fact that
  $$
 X^{1/2} \stackrel{f^e(t,\cdot)}{\longrightarrow} X^{-\alpha/2}
\hookrightarrow X^{-1/2},
  $$
being the last inclusion compact, since $\alpha <1$ (see Lemma \ref{lem:fewelldef}).

Now, applying Theorem \ref{theorem pullback} we get that the 
problem \eqref{prob1}-\eqref{prob3} has a pullback attractor
$\{\mathcal{A}_{(a)}(t): t\in \mathbb{R}\}$ in $Y$ and that $\bigcup_{t\in \mathbb{R}}\mathcal{A}_{(a)}(t)\subset Y$ is bounded. 
\qed

\section{Acknowledgments}

The authors would like to express their gratitude to Professor Ma To Fu for useful conversations about thermoelasticity.

\end{document}